\def\thmt@refnamewithcomma #1#2#3,#4,#5\@nil{%
  \@xa\def\csname\thmt@envname #1utorefname\endcsname{#3}%
  \ifcsname #2refname\endcsname
    \csname #2refname\expandafter\endcsname\expandafter{\thmt@envname}{#3}{#4}%
  \fi
}
 \declaretheorem[numberwithin=section]{theorem}
\declaretheorem[sibling=theorem]{proposition}
\declaretheorem[sibling=theorem,style=remark]{lemma}
\newcommand{\rest}[1]{\! \upharpoonright_{#1}}
\newcommand{\s}{\sigma}
\renewcommand{\epsilon}{\varepsilon}
\renewcommand{\le}{\leqslant}
\renewcommand{\ge}{\geqslant}
\renewcommand{\leq}{\leqslant}
\renewcommand{\geq}{\geqslant}
\newcommand{\PP}{\mathbb P}
\newcommand{\Cantor}{{2^\omega}}
\newcommand{\bits}{{\{0,1\}}}
\definecolor{greenish}{rgb}{0.0,0.6,0.2}
\definecolor{lightgreenish}{rgb}{0.0,0.8,0.2667}
\definecolor{lightblueish}{rgb}{0.5,0.8,1}
\title{Dimension~1 sequences are close to randoms}
\author[N.~Greenberg]{Noam Greenberg} 
\address[N.~Greenberg]{School of Mathematics, Statistics and Operations Research, Victoria University of Wellington, Wellington, New Zealand}
\email{greenberg@msor.vuw.ac.nz}
\urladdr{\url{http://homepages.mcs.vuw.ac.nz/~greenberg/}}
\author[J.S.~Miller]{Joseph S. Miller}
\address[J.S.~Miller]{Department of Mathematics, University of Wisconsin--Madison, 480 Lincoln Dr., Madison, WI 53706, USA}
\email{jmiller@math.wisc.edu}
\urladdr{\url{http://www.math.wisc.edu/~jmiller/}}
\author[A.~Shen]{Alexander Shen} 
\address[A.~Shen]{LIRMM, CNRS \& University of Montpellier, 161 rue Ada, 34095, Montpellier, France. Supported by RaCAF ANR grant.}
\email{alexander.shen@lirmm.fr}
\urladdr{\url{http://www.lirmm.fr/~ashen}}
\author[L.B.~Westrick]{Linda Brown Westrick}
\address[L.B.~Westrick]{Department of Mathematics, University of Connecticut, 341 Mansfield Rd U-1009, Storrs, CT 06269, USA}
\email{westrick@uconn.edu}
\urladdr{\url{http://www.math.uconn.edu/~westrick/}}
\thanks{The first and fourth authors were supported by a Rutherford Discovery Fellowship from the Royal Society of NZ. The second author was partially supported by grant \#358043 from the Simons Foundation. The third author was partially supported by RaCAF ANR-15-CE40-0016-01 grant}
\begin{document}
	
\begin{abstract}
We show that a sequence has effective Hausdorff dimension~1 if and only if it is coarsely similar to a Martin-L\"{o}f random sequence. More generally, 
a sequence has effective dimension $s$ if and only if it is coarsely 
similar to a weakly $s$-random sequence.  
Further, for any $s<t$, 
every sequence of effective dimension $s$ can be changed on density at most 
$H^{-1}(t)-H^{-1}(s)$ of its bits to produce
a sequence of effective dimension $t$, and this bound is optimal.
\end{abstract}

\maketitle

%% introduction & advertisement

{The theory of algorithmic randomness defines an individual object in a probability space to be \emph{random}\textbf{} if it looks plausible as an output of a corresponding random process. The first and the most studied definition was given by Martin-L\"of~\cite{Martin_Lof_randomness}: a random object is an object that satisfies all ``effective'' probability laws, i.e., does not belong to any effectively null set. (See~\cite{DowneyHirschfeldtBook,Uspensky2010,Shen2015} for details; we consider only the case of uniform Bernoulli measure on binary sequences, which corresponds to independent tossings of a fair coin.) It was shown by Schnorr and Levin (see~\cite{Schnorr1972STOC,Schnorr:_process_complexity,Levin:73a}) that an equivalent definition can be given in terms of description complexity: a bit sequence {$X\in \Cantor$} is Martin-L\"{o}f (ML) random if and only if the prefix{-free} complexity of its $n$-bit prefix {$X\rest{n}$} is at least $n-O(1)$. (See~\cite{Li.Vitanyi:93,Uspensky2010,Shen2015} for the definition of prefix{-free} complexity and for the proof of this equivalence; one may use also monotone or a priori complexity.)  {This robust class also has an equivalent characterization based on martingales that goes back to Schnorr~\cite{Schnorr1971}.}

The notion of randomness is {in another way} quite fragile: if we take a random sequence and change to zero, say, its $10$th, $100$th, $1000$th, etc.\ bits, the resulting sequence is not random, and for a good reason: a cheater that cheats once in a while is still a cheater. {To consider such sequences as ``approximately random'', one option is to relax the Levin-Schnorr definition by replacing the} $O(1)$ term in the complexity characterisation of randomness by a bigger $o(n)$ term, thus requiring that {$\lim_{n\rightarrow\infty} K(X\rest{n})/n=1$.  Such sequences coincide with the sequences of effective Hausdorff dimension~$1$. (Effective Hausdorff dimension was first explicitly introduced by Lutz~\cite{Lutz:00}. It can be defined in several equivalent ways via complexity, via natural generalizations of effective null sets, and via natural generalizations of martingales; again, see~\cite{DowneyHirschfeldtBook,Uspensky2010,Shen2015} for more information.)

Another approach follows the above example more closely: we could say that a sequence is approximately random if it differs from a random sequence on a set of density $0$. Our starting point is that this also characterizes the sequences of effective Hausdorff dimension $1$.

To set notation, for $n\ge 1$, we let~$d$ be the normalised Hamming distance on~$\bits^n$, the set of binary strings of length $n$: 
\[
	d(\s,\tau) = \frac{\#\left\{  k \,:\,  \s(k)\ne \tau(k) \right\}}{n}; 
\]
and we also denote by~$d$ the Besicovitch distance on Cantor space $\Cantor$ (the space of infinite binary sequences), defined by
\[
	 d(X,Y) = \limsup_{n\to \infty} d(X\rest{n},Y\rest{n}),
\]
where $Z\rest{n}$ stands for the $n$-bit prefix of~$Z$. 
If $d(X, Y)=0$, then we say that $X$ and~$Y$ are \emph{coarsely equivalent}.\footnote{One place this is defined is in \cite{JockuschSchupp:Asymptotic}, where it is called ``generic similarity''.}

\newtheorem*{thm:main}{\cref{thm:main}}
\begin{thm:main}
	A sequence has effective Hausdorff dimension 1 if and only if it is coarsely equivalent to a ML-random sequence. 
	% Let $X\in \Cantor$. Then $\dim(X)=1$ if and only if there is a \textup{ML} random $Y\in \Cantor$ such that $d(X,Y)=0$.
\end{thm:main}

In Section \ref{sec:dims_random}, we generalize this result to sequences of 
effective dimension $s$ in various ways.  Because a sequence $X$ having effective 
dimension $s$ implies that the prefix-free complexity of its 
$n$-bit prefix $X\rest{n}$ is at least $sn-o(n)$, it is natural to consider 
the weakly $s$-randoms, those sequences $X$ such that 
$K(X\rest{n}) \geq sn - O(1)$.

\newtheorem*{thm:dims_srandom}{\cref{thm:dims_srandom}}
\begin{thm:dims_srandom}
Every sequence of effective Hausdorff dimension $s$ is coarsely equivalent to a weakly $s$-random.
\end{thm:dims_srandom}

Along the way to proving this, we pass through the question of how to raise 
the effective dimension of a given sequence while keeping density of 
changes at a minimum. If $d(X,Y)=0$, then $\dim(X)= \dim(Y)$; so sequences of effective Hausdorff dimension $s<1$ cannot be coarsely equivalent to a ML random sequence. It is natural then to ask, what is the minimal distance required between any sequence and a random? By \cref{thm:dims_srandom}, it is equivalent to ask about distances between sequences of dimension~$s$ and dimension~1; and naturally generalising, to ask, for any $0\le s<t\le 1$, about distances between sequences of dimension~$s$ and dimension~$t$. We start with a naive bound. For any $X,Y\in \Cantor$, 
\[
	|\dim(Y) - \dim(X)| \le H(d(X,Y)).
\]
This is our \cref{prop:dim_bound}. Here~$H(p) = -(p\log p + (1-p)\log(1-p))$ is the binary entropy function defined on $[0,1]$. The binary entropy function is used to measure the size of Hamming balls.  
If $V(n,r) = \sum_{k\le nr} \binom{n}{k}$ is the size of a Hamming ball of radius $r<1/2$ in $2^n$, then 
\[
H(r)n - O(\log n) \leq \log(V(n,r)) \leq  H(r)n
\]
(see \cite[Cor.\ 9, p.\ 310]{MacWilliamsSloane:ErrorCorrecting}).

In \cref{prop:dim_bound_achieved}, we will see that this bound is tight, in the sense that if $s<t$ then there are $X,Y\in \Cantor$ with $\dim(X) = s$, $\dim(Y)=t$ and $d(X,Y) = H^{-1}(t-s)$. Note that for~$H^{-1}$ we take the branch which maps $[0,1]$ to $[0,1/2]$.

Bounding the distance from an arbitrary dimension $s$ sequence to the nearest dimension $t$ sequence requires more delicate analysis. For example, fix $0<s<t\le 1$. If $X$ is Bernoulli $H^{-1}(s)$-random, then its dimension is~$s$. But its density of 1s is~$H^{-1}(s)$. If $\dim(Y)\ge t$ then the density of 1s in~$Y$ is at least $H^{-1}(t)$, so $d(X,Y)\ge H^{-1}(t) - H^{-1}(s)$. Note that $H^{-1}(t)-H^{-1}(s) \ge H^{-1}(t-s)$, so this is a sharper bound, and it is tight:

\newtheorem*{thm:dims_dimt}{\cref{thm:dims_dimt}}
\begin{thm:dims_dimt}
For every sequence $X$ with $\dim(X) = s$, and every $t\in (s,1]$, 
there is a $Y$ with $\dim(Y)=t$ and $d(X,Y) \leq H^{-1}(t) - H^{-1}(s)$.
\end{thm:dims_dimt}

In particular, for $t=1$, in light of \cref{thm:main}, we obtain

\newtheorem*{thm:generalization}{\cref{thm:generalization}}
\begin{thm:generalization}
For every $X\in \Cantor$ there is a ML-random sequence $Y$ such that \[
	d(X,Y) \le 1/2 - H^{-1}(\dim (X)).
\]
\end{thm:generalization}

(We however prove \cref{thm:generalization} first, and elaborate on its proof to obtain \cref{thm:dims_srandom} and then \cref{thm:dims_dimt}.)

\medskip

We can also ask, starting 
from an arbitrary random, how close is the nearest
sequence of dimension $s$ guaranteed to be? 
For example, a typical construction of a sequence of 
effective dimension $1/2$ starts with a random and replaces all the even 
bits with $0$.  The distance between the resulting pair is $1/4$, less than 
the $1/2-H^{-1}(1/2) \approx .4$ needed to get to a Bernoulli random, 
but more than 
the $H^{-1}(1-1/2) \approx .1$ lower limit from \cref{prop:dim_bound}.
Here, the latter bound is tight:

\newtheorem*{thm:random_dims}{\cref{thm:random_dims}}
\begin{thm:random_dims}
 For any $Y\in \Cantor$ there is some $X \in \Cantor$ such that $\dim(X)\leq s$ and $d(X,Y)\leq H^{-1}(1-s)$. 
\end{thm:random_dims}

These results mean that in general, the distance from an arbitrary dimension 1 sequence to the nearest dimension $s$ sequence is quite a bit less than the distance from an 
arbitrary dimension $s$ sequence to the nearest dimension 1 sequence. 

\medskip

Finally, we mention that for $t<1$, the bound given by \cref{prop:dim_bound} for the required distance to a sequence of dimension $s<t$ is not optimal; we show below (\cref{prop:naive_bound_not_optimal}) that for the case $t=1/2$ and $s=0$, there are $Y\in \Cantor$ of dimension~$1/2$ with no $X\in \Cantor$ of dimension 0 within distance $H^{-1}(1/2)$. Here decreasing information is not so simple due to the possibility of redundancy of information.

\medskip

Each of these infinitary results have finite versions.  Examples of similar
finite theorems previously appeared in~\cite{BuhrmanEtAl}.  The finite 
versions are proved using either Harper's Theorem, a result of finite 
combinatorics; or estimates on covering Hamming space by balls of a given radius. We adapt those methods, together with some convexity 
arguments, to prove our results.

\medskip

These results exist in the context of a larger set of questions on the 
general theme of asking whether
{every sequence of high effective dimension is obtainable by starting with a random and ``messing it up''.  If a random were messed up only slightly to 
produce a sequence of high effective dimension, it might be possible to 
computably extract a random sequence back out.  It was shown in 
\cite{BienvenuDotyStephan2009,FHPVW2011} 
that if $X$ has positive packing dimension, then $X$ computes a sequence of packing dimension at least $1-\epsilon$ for each $\epsilon>0$.  On the other hand, the second author showed that for any left-c.e. $\alpha \in (0,1)$, there is a sequence of effective dimension $\alpha$ which does not compute any sequence of effective dimension greater than $\alpha$ \cite{Miller:extracting_information},
and in \cite{GreenbergMiller:DNC_and_Hausdorff} it was shown that there is a sequence of dimension~1 that does not compute any random.  Therefore, the 
symmetric differences $X\Delta Y$ which we find here are not, in general, 
computable.  For more references on this type of question, see \cite[Section 13.7]{DowneyHirschfeldtBook}.
}

% Recall that %if 
% $\dim(X) = \liminf_n K(X\rest{n})/n$, {where $\dim(X)$ is the (effective) Hausdorff dimension of a binary sequence $X$ and $K(\s)$ stands for the prefix{-free} complexity of a binary string~$\s$}. It is not hard to see that if $d(X, Y)=0$, then $\dim (X)= \dim(Y)$. Since ML random sequences have dimension~1, if $X$ is coarsely similar to a random, then $X$ has dimension~1. {In the next section, we prove the converse.}

To set notation, for a binary string $\s$ of length~$n$ (we write $\s\in \bits^n$) let 
\[
	\dim(\s) = \frac{K(\s)}{n};
\]
then for an infinite binary sequence $X\in \Cantor$, 
\[
	\dim(X) = \liminf_n \dim (X\rest{n}). 
\]
We can similarly define conditional dimension: 
\[
	\dim(\s \mid \tau) = \frac{K(\s\mid \tau)}{|\s|}.
\]

% \begin{notation}
% 	For an effectively closed set ($\Pi^0_1$ class) $P$, we let $\PP$ be the set of strings~$\s$ extendible on~$P$, that is, $[\s]\cap P \ne \emptyset$. 
% \end{notation}

%%%%%%%%
%%%%%%%%
\section{{Dimension~1 sequences and randoms}}
%%%%%%%%
%%%%%%%%

In this section, we prove \cref{thm:main}. Let $P$ be the set of random sequences with deficiency $0$:
\[
	P = \left\{ Y \,:\,  (\forall n)\, K(Y\rest n) \ge n \right\}.
\]
This $P$ is not empty. {Given a dimension~1 sequence $X$, we will build a $Y\in P$ such that $d(X,Y)=0$.}	

Let~$\PP$ be the set of \emph{extendible strings} of~$P$: the prefixes of elements of~$P$. The following lemma tells us that every string~$\s$ in $\PP$ has many extensions in~$\PP$ of length $2|\s|$. An analogous lemma for supermartingales rather than prefix-free complexity was proved by Merkle and Mihailovic \cite[Rmk.\ 3.1]{MerkleMihailovicJSL}. They gave a clean presentation of the Ku\v{c}era--G\'{a}cs theorem. G\'{a}cs made use of a similar lemma (\cite[Lem.\ 1]{Gacs:KuceraGacs}), which guaranteed a sufficient number of extensions in $\Pi^0_1$ classes.

\begin{lemma} \label{lem:number_of_extensions}
	Every $\s\in \PP$ has at least $2^{|\s| - K(|\s|)-O(1)}$ extensions in~$\PP$ of length $2|\s|$.  
	% There is a constant $k$ such that every $\s\in \PP$ has at least $2^{|\s| - K(|\s|)-k}$  extensions in~$\PP$ of length $2|\s|$.  
\end{lemma}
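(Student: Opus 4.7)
Set $n = |\s|$. My approach is to deduce the count from the measure-theoretic bound
\[
\mu(P \cap [\s]) \;\ge\; 2^{-n - K(n) - O(1)},
\]
where $\mu$ is the uniform measure on $\Cantor$ and $[\s]$ is the cylinder of extensions of $\s$. Given this bound, each $\tau\in\PP$ with $|\tau|=2n$ and $\s \preceq \tau$ contributes at most $\mu([\tau]) = 2^{-2n}$ to $\mu(P \cap [\s])$, while $\tau\notin\PP$ contributes nothing. Summing over $\tau$ of length $2n$ extending $\s$ yields at least $2^{2n}\mu(P \cap [\s]) \ge 2^{n-K(n)-O(1)}$ extensions, which is what the lemma demands.

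For the measure bound, the complement $[\s]\setminus P$ is a c.e.\ open subset of $[\s]$ that decomposes as the disjoint union $\bigsqcup_{\tau \in F_\s}[\tau]$, where $F_\s$ is the prefix-free antichain of minimal $\tau \succ \s$ with $K(\tau) < |\tau|$. So the desired inequality amounts to
\[
\sum_{\tau \in F_\s} 2^{-|\tau|} \;\le\; 2^{-n}\bigl(1 - 2^{-K(n) - O(1)}\bigr).
\]
Using only $K(\tau) \le |\tau|-1$ for $\tau\in F_\s$ together with the unconditional Kraft inequality $\sum_\tau 2^{-K(\tau)} \le 1$ one obtains merely $\sum_{\tau \in F_\s} 2^{-|\tau|} \le \tfrac12$, which is far too weak. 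To sharpen this I follow the Merkle--Mihailovic strategy: replace the prefix-free complexity with the universal lower-semicomputable supermartingale $M$ (equivalently, the universal continuous semimeasure), for which $\s \in \PP$ translates, via the Levin--G\'{a}cs relation between $K$ and the continuous universal semimeasure, into $M(\s) \le 2^{K(n) + O(1)}$; then apply the standard bushy-tree lemma for supermartingales to count $2n$-extensions along which $M$ stays bounded at every intermediate prefix.

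\textbf{Main obstacle.} The delicate point is to extract precisely the $K(n)$ correction, nothing coarser such as $2K(n)$ or $O(\log n)$. This requires carefully matching the overhead of describing the length $n$ against the Kraft budget of first failures, which is most cleanly done through the Levin--G\'{a}cs translation above. A secondary subtlety is passing from the local condition ``$K(\tau') \ge |\tau'|$ for all $\tau' \preceq \tau$'' to the global extendibility condition ``$\tau \in \PP$'': by K\"{o}nig's lemma, non-extendibility of such a $\tau$ produces further first failures above $\tau$, and their Kraft mass is already absorbed into $\sum_{\tau \in F_\s} 2^{-|\tau|}$, so the two conditions have the same cost.
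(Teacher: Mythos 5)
Your plan reduces the lemma to a strictly stronger, unproved claim: that $\mu(P\cap[\sigma]) \ge 2^{-n-K(n)-O(1)}$ for every $\sigma\in\PP$ of length $n$. The deduction of the extension count from this measure bound is fine, but the measure bound itself is where all the content lies, and the proposed route through the universal supermartingale does not close. Translating $\sigma\in\PP$ via Levin--G\'acs gives only $M(\sigma)\le 2^{K(n)+O(1)}$ for the universal supermartingale $M$; Ville's inequality then produces, for any threshold $c'\ge 2^{K(n)+O(1)}$, many length-$2n$ extensions $\tau$ along which $M$ stays $\le c'$ --- but translating \emph{back} gives only $K(\tau')\ge|\tau'|-\log c'-O(1)$, which is strictly weaker than $K(\tau')\ge|\tau'|$. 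The round trip through $M$ loses exactly the zero-deficiency slack that defines $P$, so the extensions you would count are never shown to lie in $\PP$, and the ``secondary subtlety'' paragraph does not repair this: it just observes that dead ends contribute to $\sum_{\tau\in F_\sigma}2^{-|\tau|}$, a quantity you never actually bound. It is moreover unclear that the measure bound is even true: iterating the lemma itself gives only $\mu(P\cap[\sigma])\ge 2^{-n-\sum_{j<k}K(2^jn)-O(k)}$, which tends to $0$ as $k\to\infty$; and the direct domination argument, namely that $g(\sigma)=2^{-n}-\mu(P\cap[\sigma])$ is a lower-semicomputable measure hence $g(\sigma)\le cM(\sigma)\le c2^{-n+K(n)+O(1)}$, becomes vacuous once $K(n)$ exceeds a constant.

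The paper's proof avoids measures and supermartingales entirely and argues directly on the count via compressibility. Suppose some $\sigma\in\PP$ had fewer than $2^{2n-K(\sigma)-k}$ extensions in $\PP$ of length $2n$. Since $\PP$ is co-c.e., given $\sigma$, $K(\sigma)$, and $k$ one can enumerate complements until fewer than that many candidates remain, and then specify each surviving extension $\sigma'$ by a $(2n-K(\sigma)-k)$-bit ordinal; the complexity-of-pairs estimate then gives $K(\sigma')\le 2n-k+O(\log k)$, contradicting $K(\sigma')\ge 2n$ (forced by $\sigma'\in\PP$) once $k$ is large. Finally $K(\sigma)\le n+K(n)+O(1)$ converts the bound to the stated form. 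This is self-contained and targets exactly what the lemma asks --- a count of extensions at one level --- rather than a global measure statement.
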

\begin{proof}
We prove that there is a $k$ such that every $\sigma\in\mathbb{P}$ has at least $2^{2|\sigma|-K(\sigma)-k}$ extensions of length $2|\sigma|$. This is enough, since $K(\sigma)\le |\sigma|+K(|\sigma|)+O(1)$.  %indeed, every string of length $n$ has complexity at most $n+K(n)+O(1)$.

Suppose that some $\sigma\in\mathbb{P}$ has fewer than $2^{2|\s|-K(\s)-k}$ extensions in $\mathbb{P}$ (but has some, otherwise $\sigma$ cannot be in $\mathbb{P}$). Then each extension (denoted by $\sigma'$) has small complexity:
\begin{multline*}
K(\sigma')\le K(\sigma)+K(\sigma'|\sigma,K(\sigma))+O(1)\le\\
\le K(\sigma)+K(\sigma'|k,\sigma,K(\sigma))+O(\log k)\le \\
\le K(\sigma)+2|\sigma|-K(\sigma)-k+O(\log k)=2|\sigma|-k+O(\log k)
\end{multline*}
The first inequality is the formula for the complexity of a pair. In the second one we add $k$ to the condition; the additional $O(\log k)$ term appears. For the third one, if we know $k$, $\sigma$, and $K(\sigma)$, then we can wait until fewer than $2^{2|\sigma|-K(\sigma)-k}$ candidates for $\sigma'$ remain (the set $\mathbb{P}$ is co-c.e.), and then specify each remaining candidate by its ordinal number using a $2|\sigma|-K(\sigma)-k$ bit string; this is a self-delimiting description since its length is known from the condition.

For each such $\sigma'$, we have $K(\sigma')\ge 2|\sigma|$. %-c$
Therefore, $k-O(\log k)\le 0$. %c$. 
So if such a $\sigma$ exists, then $k$ is bounded. Equivalently, if $k$ is sufficiently large, then there is no such $\sigma$, i.e., each $\sigma$ must have at least $2^{2|\s|-K(\s)-k}$ extensions.
\end{proof}

For sets of strings $A,B\subseteq \bits^n$, we let $d(A,B)= \min \left\{ d(\s,\tau) \,:\, \s\in A, \tau\in B  \right\}$. We let $d(\s,A) = d(\{\s\},A)$.

Harper's theorem (\cite{Harper:Theorem}, see also~\cite{FranklFuredi}) says 
that among all subsets $A,B\subseteq \bits^n$ of fixed sizes, a pair with
maximal distance is obtained by taking spheres with opposite centres 
of $0^n$ and $1^n$.  Here a sphere centred at $\s$ is a set~$C$ that (for some $k$) contains the Hamming ball of radius $k/n$ centred at $\s$ and is also contained in the ball of radius $(k+1)/n$ with the same centre.\footnote{{This is, admittedly, an unusual use of ``sphere''; we adopt it from~\cite{FranklFuredi}.}} 

\newtheorem*{thm:harper}{Harper's Theorem}
\begin{thm:harper}
For any sets $A,B\subseteq \bits^n$, there are spheres $\hat A, \hat B$, centred at $0^n$ and~$1^n$ respectively, such that $|A| = |\hat A|$, $|B| = |\hat B|$ and $d(A,B) \le d(\hat A, \hat B)$. 
\end{thm:harper}

A first application, useful for us, is the following.

\begin{lemma} \label{lem:Harper_corollary}
	For every $\epsilon>0$ there is a $q<1$ such that for any~$n$ and any $A\subseteq \bits^n$ of size at least $2^{nq}$, there are at most $2^{nq}$ strings~$\s\in \bits^n$ such that $d(\s,A)> \epsilon$. 
\end{lemma}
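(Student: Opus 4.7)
The plan is to apply Harper's theorem directly, after characterising $B = \{\sigma \in \bits^n : d(\sigma, A) > \epsilon\}$. By construction $d(A, B) \ge \epsilon$ (if $B$ is empty there is nothing to prove), so Harper's theorem furnishes spheres $\hat A$ and $\hat B$, centred at $0^n$ and $1^n$, with $|\hat A| = |A|$, $|\hat B| = |B|$, and $d(\hat A, \hat B) \ge \epsilon$. Write $k_A$ and $k_B$ for the associated integers, so that $\hat A$ is contained in the Hamming ball around $0^n$ of radius $(k_A+1)/n$, and similarly $\hat B$ around $1^n$.

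Strings in $\hat A$ have Hamming weight at most $k_A + 1$, while strings in $\hat B$ have weight at least $n - k_B - 1$, so any pair from $\hat A \times \hat B$ differs in at least $n - k_A - k_B - 2$ positions. Hence $d(\hat A, \hat B) \ge \epsilon$ forces $k_A + k_B \le (1 - \epsilon)n - 2$. Meanwhile, the hypothesis $|A| \ge 2^{nq}$, together with the bound $|\hat A| \le V(n,(k_A+1)/n) \le 2^{H((k_A+1)/n)\,n}$ on the $[0,1/2]$-branch of~$H$ (and the case $(k_A+1)/n > 1/2$ only strengthens what follows), gives $(k_A+1)/n \ge H^{-1}(q)$. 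Combining the two yields
\[
(k_B+1)/n \le (1-\epsilon) - H^{-1}(q).
\]

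The remaining step is to choose the right $q < 1$. Pick any $q$ with $H^{-1}(q) > (1-\epsilon)/2$; such $q$ exists because $H^{-1}$ is continuous on $[0,1]$ with $H^{-1}(1) = 1/2$. Then $(1-\epsilon) - H^{-1}(q) < H^{-1}(q) \le 1/2$, and since $H$ is strictly increasing on $[0, 1/2]$, applying the entropy bound again,
\[
|B| = |\hat B| \le V(n,(k_B+1)/n) \le 2^{H((1-\epsilon) - H^{-1}(q))\,n} < 2^{H(H^{-1}(q))\,n} = 2^{qn},
\]
as required. The main obstacle here is less a real difficulty than a matter of bookkeeping: one needs the slack built into the choice of $q$ to absorb the various off-by-one terms produced by the definition of a ``sphere,'' and the fact that these slacks are constants independent of~$n$ is exactly what allows a single $q$ to work uniformly for all $n$.
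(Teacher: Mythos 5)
Your proof uses the same two tools as the paper---Harper's theorem and the entropy bound $\log V(n,r)\le H(r)n$---but the step deriving $k_A+k_B\le(1-\epsilon)n-2$ has the inequality running the wrong way. You invoke the \emph{outer} containments $\hat A\subseteq B(0^n,(k_A+1)/n)$ and $\hat B\subseteq B(1^n,(k_B+1)/n)$ to get weight bounds and conclude that every pair in $\hat A\times\hat B$ differs in at least $n-k_A-k_B-2$ positions. That is a \emph{lower} bound on the minimum distance; stacking it next to the other lower bound $d(\hat A,\hat B)\ge\epsilon$ yields no constraint on $k_A+k_B$. What you need is an \emph{upper} bound on $d(\hat A,\hat B)$, and that comes from the \emph{inner} containments: since $\hat A\supseteq B(0^n,k_A/n)$ contains every string of weight $\le k_A$ and $\hat B\supseteq B(1^n,k_B/n)$ contains every string of weight $\ge n-k_B$, whenever $k_A+k_B<n$ there is an explicit pair at Hamming distance exactly $n-k_A-k_B$. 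So $d(\hat A,\hat B)\le(n-k_A-k_B)/n$, and $d(\hat A,\hat B)\ge\epsilon$ forces $k_A+k_B\le(1-\epsilon)n$---not $(1-\epsilon)n-2$.

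Once corrected, your chain of estimates gives $(k_B+1)/n\le(1-\epsilon)-H^{-1}(q)+2/n$, and the extra $2/n$ reintroduces the discretisation error that your spurious $-2$ was quietly cancelling. For fixed $\epsilon$ and $q$ this costs only $O(1)$ in $\log|B|$, so $|B|\le 2^{nq}$ does hold for all $n$ past some threshold $N(\epsilon,q)$; but your closing remark that the slacks are ``constants independent of $n$'' does not produce a proof for $n\le N$. The paper closes this gap explicitly: after finding a $q_0<1$ that works for large $n$, increase $q$ towards $1$ so that for small $n$ no pair of disjoint sets $A,B\subseteq\bits^n$ can both have size at least $2^{nq}$; raising $q$ only weakens the hypothesis and strengthens the conclusion, so the large-$n$ case survives. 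You should state this move rather than gesture at ``slack in $q$.''
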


\begin{proof}
{For a given} $A\subseteq \bits^n$, let 
	$B = \left\{ \s\in \bits^n \,:\, d(\s,A) > \epsilon  \right\}$.
We need to show that $A$ and $B$ cannot both contain at least $2^{nq}$ elements, for an appropriate choice of $q$. Note that if $|A| \ge 2^{nq}$ and $|B| \ge 2^{nq}$, where $q = H(1/2-\epsilon/2)$, then the inner radii of the spheres $\hat A$ and $\hat B$ from Harper's Theorem are at least $1/2-\epsilon/2 -O(1)/n$, because each sphere is an intermediate set between two balls whose radii differ by $1/n$.  Therefore, $d(A,B) \le \epsilon +O(1)/n$. Note that $H$ is strictly increasing on $[0,1/2]$ and $H(1/2)=1$, so $q<1$.
	
To get rid of the error term $O(1)/n$ that appears because of discretisation, we can decrease $\epsilon$ in advance. Then the statement is true for all sufficiently large $n$. To make it true for all $n$, we choose $q$ so close to $1$ that the statement is vacuous for small $n$; it is guaranteed if $2^{nq}>2^n-1$.
\end{proof}

These tools (Harper's theorem and the entropy bound) were used in~\cite{BuhrmanEtAl} to prove {results on increasing the Kolmogorov complexity of finite strings by flipping a limited number of bits. As an example of this technique, consider the following ``finite version''} of \Cref{thm:main}: for any $\epsilon>0$ there is a $q<1$ such that for sufficiently large~$n$, for any string $\s\in \bits^n$ of dimension at least~$q$ (i.e., $K(\s)\ge nq$), there is a random string $\tau\in \bits^n$ (i.e., $K(\tau)\ge n$) such that $d(\s,\tau)\le \epsilon$. Here is the argument using \cref{lem:Harper_corollary}. The set of random strings has size at least $2^{n-1}$, and is co-c.e.; so once we see that a string~$\s$ is one of the fewer than $2^{qn}$ many strings that are at least $\epsilon$-away from each random string, we can give it a description of length essentially~$nq$. 

A naive plan for the infinite version is to repeat this construction for longer and longer {consecutive blocks of bits of} a given sequence~$X$ of dimension~1, finding closer and closer extensions in a $\Pi^0_1$ class of randoms. 
This fails because the opponent $X$ can copy the extra information that we 
pump into $Y$, erasing our gains.  For example, if $X$ begins with a very 
large string of 0s, we must begin $Y$ with a random string to stay in $P$. 
Then $X$ (which must have dimension 1 eventually) could bring its 
initial segment complexity as close to 1/2 as it likes by copying $Y$ 
from the beginning onto its upcoming even bits.  
Then $Y$ can never take advantage of $X$'s 
complexity to get closer to $X$, 
since that would cause $Y$ to repeat information.  We cannot overcome 
this problem by taking huge steps (so large that $X$ runs out of things to 
copy and must show us new information) because $X$ can still use a similar 
strategy to ensure that the density of symmetric difference is large 
near the beginning of each huge interval, driving up the 
$\limsup_n d(X\rest{n}, Y\rest{n})$ even as we keep $d(X\rest{n},Y\rest{n})$ 
low for $n$ on the interval boundary.

Our solution is to not do an initial segment construction but rather use 
compactness to let ourselves change our mind about our initial segment 
whenever the opponent seems to take advantage of its extra information. 
\cref{lem:main} below shows how to do this.

Let $E$ be the set of all finite sequences $\bar \epsilon = (\epsilon_1,\epsilon_2,\dots, \epsilon_m)$ such that $\epsilon_1 = 1$ and $\epsilon_{k+1}$ equals either $\epsilon_k$ or $\epsilon_{k}/2$ for all $k<m$.  For $m\ge 1$, binary sequences $\s,\tau$ of length $2^m$, and $\bar \epsilon\in E$ of length~$m$, we write
\[
	\s\sim_{\bar \epsilon} \tau
\]
if for all $k\in\{1,\dots, m\}$, 
\[
	d\left(\s\rest{[2^{k-1},2^k)},\tau\rest{[2^{k-1},2^k)}\right) \le \epsilon_k.
\]
%The terms of $s$ are numbered from $0$ to $2^{m-1}$, so 
So we compare the second halves of the strings for $k=m$, the second quarters for $k=m-1$, and so on. (The $0$th bits of $\s$ and $\tau$ are ignored.)

\begin{lemma} \label{lem:main}
	For every $\epsilon>0$ there is an $s<1$ such that for sufficiently large~$m$, for every $\bar\epsilon\in E$ of length~$m$, and for all binary strings $\s,\rho$ of length $2^m$, if
	\begin{enumerate}
		\item $(\bar \epsilon,\epsilon)\in E$ (i.e., $\epsilon \in \{\epsilon_m, \epsilon_m/2\}$),
		\item $\dim (\rho \mid \s) \ge s$,\footnote{Recall that $\dim(\rho\mid \s)= K(\rho\mid \s)/|\rho|$.} and
		\item there is a $\tau\in \PP$ of length $2^m$ such that $\tau\sim_{\bar \epsilon} \s$,
	\end{enumerate}
	then there is a $\nu\in \PP$ of length $2^{m+1}$ such that $\nu\sim_{(\bar \epsilon,\epsilon)}\s\rho$.
\end{lemma}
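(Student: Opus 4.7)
The plan is to combine \cref{lem:number_of_extensions} with \cref{lem:Harper_corollary}, exploiting the fact that $\PP$ is co-c.e. Fix $\epsilon > 0$; let $q = q(\epsilon) < 1$ be the constant from \cref{lem:Harper_corollary}, and set $s = (1+q)/2$. For given $m$, $\bar\epsilon$, $\sigma$, $\rho$ satisfying (1)--(3), define
\[
	A = \bigl\{ \tau' \in \bits^{2^m} \,:\, \exists\,\tau \in \PP \text{ with } |\tau| = 2^m,\ \tau \sim_{\bar\epsilon} \sigma,\ \tau\tau' \in \PP \bigr\}.
\]
Since $\PP$ is co-c.e., $A$ is co-c.e.\ uniformly in $(\sigma,\bar\epsilon)$. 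By hypothesis (3), some qualifying $\tau$ exists, and \cref{lem:number_of_extensions} produces at least $2^{2^m - O(m)}$ extensions of it in $\PP$, so $|A| \ge 2^{2^m - O(m)}$; this exceeds $2^{2^m q}$ for all sufficiently large $m$.

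Producing the required $\nu \in \PP$ amounts to finding $\tau' \in A$ with $d(\tau',\rho) \le \epsilon$: taking any $\tau$ that witnesses $\tau' \in A$, the concatenation $\nu = \tau\tau'$ lies in $\PP$ and satisfies $\nu \sim_{(\bar\epsilon,\epsilon)} \sigma\rho$, since $\tau \sim_{\bar\epsilon} \sigma$ handles the blocks $k \le m$ while $d(\tau',\rho) \le \epsilon$ handles the final block. Suppose toward a contradiction that no such $\tau'$ exists, i.e.\ that $\rho$ lies in $B := \{\rho' \in \bits^{2^m} : d(\rho',A) > \epsilon\}$. By \cref{lem:Harper_corollary}, $|B| \le 2^{2^m q}$. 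Approximating $A$ from above by finite sets $A_t \supseteq A$ (possible since $\PP$ is co-c.e.) and setting $B_t = \{\rho' : d(\rho',A_t) > \epsilon\}$, the minima $d(\rho', A_t)$ monotonically increase as $A_t$ shrinks, so $B_t$ grows and $B = \bigcup_t B_t$. Hence $B$ is c.e.\ uniformly in $(\sigma, \bar\epsilon)$.

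The key step is bounding $K(\rho \mid \sigma)$. Given $\sigma$, the length $m$ is known, so $\bar\epsilon$ can be specified by $m-1$ bits (one per coordinate after $\epsilon_1 = 1$), giving $K(\bar\epsilon \mid \sigma) \le m + O(1)$. From $(\sigma,\bar\epsilon)$ we enumerate $B$ and specify $\rho$ by its index, at cost $\log|B| + O(1) \le 2^m q + O(1)$. Hence $K(\rho \mid \sigma) \le 2^m q + m + O(1)$ and $\dim(\rho\mid\sigma) \le q + O(m / 2^m)$. For $m$ sufficiently large (depending on $\epsilon$), this is below $s = (1+q)/2$, contradicting (2). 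Therefore $\rho \notin B$, and the required $\nu$ exists.

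The main obstacle is the bookkeeping in the complexity bound: one must verify that $\bar\epsilon$ and the enumeration of $B$ are uniformly computable from $\sigma$ with additive cost $O(m)$, so that the Harper savings $(1-q)\cdot 2^m$ dominate the overhead. A subtler point is checking that the co-c.e.\ approximation $A_t$ of $A$ (inherited from $\PP$) really does yield $B$ as a genuine c.e.\ set, which relies on the min defining $d(\rho', A_t)$ moving in the right direction as $A_t$ shrinks.
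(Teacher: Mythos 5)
Your proof is correct and follows essentially the same route as the paper's: define $A$ as the set of one-block continuations of $\bar\epsilon$-compatible $\hat\tau\in\PP$, use \cref{lem:number_of_extensions} to get $|A|\ge 2^{nq}$ for large $m$, apply \cref{lem:Harper_corollary} to bound the $\epsilon$-remote set $B$, and then compress $\rho\in B$ (relative to $\sigma$) to length $nq+m+O(1)$ to contradict $\dim(\rho\mid\sigma)\ge s$. Your explicit verification that $B$ is c.e.\ via a shrinking co-c.e.\ approximation of $A$, and your bookkeeping of the $m$ bits for $\bar\epsilon$, are exactly the observations the paper relies on (stated more tersely there).
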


Note that the guaranteed~$\nu$ need not be an extension of~$\tau$. 

\begin{proof}
Let $n = 2^m$. For a given $\sigma$ and $\bar\epsilon$, let $A$ be the set of all strings $\eta\in \bits^n$ such that for some $\hat \tau \in \PP\cap \bits^n$ we have $\hat \tau\sim_{\bar \epsilon} \s$ and $\hat \tau \eta\in \PP$. The set~$A$ is co-c.e.\ (given~$\s$ and $\bar \epsilon$). Let~$q$ be given by \cref{lem:Harper_corollary} (for $\epsilon$). Now apply \cref{lem:number_of_extensions} to {any} $\hat\tau\in \PP\cap \bits^n$: since $K(n)/n\to 0$ as $n\to\infty$, the size of~$A$ (and even its part that corresponds to this specific $\hat\tau$) is at least $2^{nq}$ (for sufficiently large $m$).

Let $B$ be the set of strings $\pi\in \bits^n$ such that $d(\pi,A)\ge \epsilon$. The set $B$ is c.e., and \cref{lem:Harper_corollary} guarantees that the size of~$B$ is at most $2^{nq}$. This implies that each string in~$B$ can be given a description (conditioned on~$\s$) of length $nq+m+O(1)$ bits;  $m$ bits are used to specify $\bar \epsilon$.
% We note that we are ignoring here, and below, the fact that our codes need to be prefix-free; this adds another $m=\log n$ many bits.
%%% Joe -- But $n$ can be derived from $\sigma$
%; in other words, it can be compressed by $(1-q)n -m$ bits. 
Set $s>q$.  Then since $m = \log n$, 
for sufficiently large~$m$ we have
%each string in~$B$ can be compressed by $n\cdot (1-q)/2$ bits; i.e.{,} 
$\dim(\pi \mid \s) <  s$ for all $\pi\in B$. So
%, letting $s = (1+q)/2$, we get 
$\rho\notin B$. This means that there is some $\eta\in A$ such that $d(\eta,\rho)\le \epsilon$. Let $\hat \tau$ witness that $\eta\in A$. Then $\nu = \hat\tau\eta$ is as required. 
\end{proof}

We finish our preparation with three easy observations.

\begin{lemma} \label{lem:distance}
	Let $X,Y\in \Cantor$ and suppose that 
	\[
		\lim_{m\to \infty} d(X\rest{[2^{m-1},2^m)},Y\rest{[2^{m-1},2^m)}) = 0.
	\]
	Then $d(X,Y)=0$. \qed
\end{lemma}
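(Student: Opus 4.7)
The plan is straightforward: bound $d(X \rest n, Y \rest n)$ by writing $n$ in dyadic form and counting mismatches block by block. Let $\delta_j = d(X \rest{[2^{j-1}, 2^j)}, Y \rest{[2^{j-1}, 2^j)})$, so by hypothesis $\delta_j \to 0$. For any $n \ge 2$, choose $m$ with $2^{m-1} \le n < 2^m$. The number of positions $k < n$ with $X(k) \ne Y(k)$ is at most the number of such positions in all of $[0, 2^m)$, which is at most $1 + \sum_{j=1}^m 2^{j-1} \delta_j$ (the extra $+1$ accounts for position $0$, which lies in none of the blocks $[2^{j-1},2^j)$).

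The main estimate is then a standard splitting argument. Given $\epsilon > 0$, choose $M$ so that $\delta_j < \epsilon$ for all $j > M$. Splitting the sum at $M$ gives
\[
\sum_{j=1}^m 2^{j-1} \delta_j \;\le\; \sum_{j=1}^{M} 2^{j-1} + \epsilon \sum_{j=M+1}^{m} 2^{j-1} \;\le\; 2^M + \epsilon \cdot 2^{m-1} \cdot 2.
\]
Dividing by $n \ge 2^{m-1}$ and using that $m \to \infty$ as $n \to \infty$, the contribution of the first $M$ blocks is absorbed into a vanishing term $(1+2^M)/2^{m-1}$, while the remaining blocks contribute at most $2\epsilon$. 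Hence $\limsup_n d(X \rest n, Y \rest n) \le 2\epsilon$, and since $\epsilon$ was arbitrary, $d(X,Y) = 0$.

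There is no real obstacle here; the argument just exploits the fact that the $m$-th dyadic block $[2^{m-1}, 2^m)$ already accounts for at least half of $[0, 2^m)$, so control of the densities on the dyadic blocks is strong enough to control the Besicovitch limsup along all lengths, not merely along powers of two. The only minor bookkeeping point is the off-by-one coming from the singleton $\{0\}$, which is harmless.
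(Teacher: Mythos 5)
Your argument is correct; the paper marks this lemma with \qed and omits the proof entirely, treating it as immediate. Your splitting argument (using that block $[2^{m-1},2^m)$ is at least half of $[0,2^m)$, so dyadic control suffices for the Besicovitch limsup along all $n$) is exactly the obvious argument the authors had in mind.
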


\begin{lemma} \label{lem:compression}
	Let $X\in \Cantor$ and suppose that $\dim(X)=1$. Then
	\[
		\lim_{m\to \infty} \frac{K(X\rest{[2^m,2^{m+1})} \mid X\rest{2^m})}{2^m} = 1.
	\]
\end{lemma}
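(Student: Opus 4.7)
The plan is to sandwich the ratio $K(X\rest{[2^m,2^{m+1})} \mid X\rest{2^m})/2^m$ between a $\limsup$ of $1+o(1)$ and a matching $\liminf$. The upper bound is immediate from the trivial estimate $K(\rho\mid\pi) \leq |\rho|+2\log|\rho|+O(1)$ applied with $|\rho|=2^m$, yielding $\limsup \leq 1$. So the real content is the lower bound.

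For that, I would invoke the standard prefix-free subadditivity
\[
	K(\sigma\tau) \leq K(\sigma) + K(\tau\mid\sigma) + O(1),
\]
obtained by concatenating a self-delimiting description of $\sigma$ with a self-delimiting description of $\tau$ relative to $\sigma$. Applied to $\sigma = X\rest{2^m}$ and $\tau = X\rest{[2^m,2^{m+1})}$ and rearranged, this gives
\[
	K(X\rest{[2^m,2^{m+1})} \mid X\rest{2^m}) \;\geq\; K(X\rest{2^{m+1}}) - K(X\rest{2^m}) - O(1).
\]
Fix $\epsilon>0$. Because $\dim(X)=1=\liminf_n K(X\rest{n})/n$, the defining property of liminf yields $K(X\rest{2^{m+1}}) \geq (1-\epsilon)\,2^{m+1}$ for all sufficiently large $m$, while the unconditional estimate gives $K(X\rest{2^m}) \leq 2^m + O(m)$. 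Dividing the previous display by $2^m$ shows the ratio is eventually at least $1-2\epsilon - O(m/2^m)$, so its $\liminf$ is at least $1-2\epsilon$; letting $\epsilon\to 0$ forces $\liminf \geq 1$, which together with the upper bound gives the claimed limit.

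I do not anticipate any real difficulty. The one point to keep in mind is that the dimension hypothesis, being a $\liminf$ condition, supplies the lower estimate on $K(X\rest{n})$ for \emph{all} sufficiently large $n$, in particular at the doubled length $n=2^{m+1}$; that is exactly what subadditivity needs in order to transfer the global complexity bound on $X$ into a conditional complexity bound on the second half of each initial segment. No finer analysis (for instance, control along a subsequence) is required.
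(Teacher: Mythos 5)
Your proof is correct and takes essentially the same route as the paper: both rearrange the relation between $K(X\rest{2^{m+1}})$, $K(X\rest{2^m})$, and $K(X\rest{[2^m,2^{m+1})}\mid X\rest{2^m})$ to transfer the $\liminf$ bound from $\dim(X)=1$ into a lower bound on the conditional term, using the trivial upper bound on $K(X\rest{2^m})$. The only cosmetic difference is that you explicitly invoke just the subadditivity $K(\sigma\tau)\le K(\sigma)+K(\tau\mid\sigma)+O(1)$, whereas the paper cites the full ``complexity of pairs formula''; since only this easy direction is needed, the two arguments are the same.
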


\begin{proof}
The complexity of pairs formula shows that 
\[
K(X\rest {2^{m+1}})=
K(X\rest{2^m})+K(X\rest{[2^m, 2^{m+1})}\mid X\rest{2^m})+o(2^m);
\]
the sum can be (almost) maximal only if both terms are (almost) maximal.
\end{proof}

\begin{lemma} \label{lem:converse}
	If $d(X,Y)=0$, then $\dim (X)= \dim(Y)$. \qed
\end{lemma}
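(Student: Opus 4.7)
The plan is to show that $K(X\rest n)$ and $K(Y\rest n)$ agree up to an additive $o(n)$ term. The key observation is that if we know $Y\rest n$, then to recover $X\rest n$ we only need to indicate which positions differ. Writing $d_n = d(X\rest n, Y\rest n)$, the differing positions form a set of size $d_n \cdot n$, so $X\rest n$ lies in the Hamming ball of radius $d_n \cdot n$ around $Y\rest n$. Once $d_n \leq 1/2$, the entropy bound $V(n, d_n) \leq 2^{H(d_n) n}$ quoted in the introduction (from \cite{MacWilliamsSloane:ErrorCorrecting}) bounds the size of this ball. Adding $O(\log n)$ bits to encode $n$ and the radius self-delimitingly gives
\[
K(X\rest n \mid Y\rest n) \leq H(d_n)\cdot n + O(\log n).
\]

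Combining with the prefix-free pairing inequality $K(X\rest n) \leq K(Y\rest n) + K(X\rest n \mid Y\rest n) + O(1)$ yields
\[
K(X\rest n) \leq K(Y\rest n) + H(d_n)\cdot n + O(\log n).
\]
The hypothesis $d(X,Y) = \limsup_n d_n = 0$ forces $d_n \to 0$, so by continuity of $H$ at $0$ we have $H(d_n) \to 0$. Dividing by $n$ and taking $\liminf$ gives $\dim(X) \leq \dim(Y)$, and the symmetric argument (swap the roles of $X$ and $Y$) gives the reverse inequality.

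No serious obstacle is anticipated: the whole argument is a direct application of the Hamming-ball counting bound that the introduction already recalls, plus the standard complexity-of-pairs inequality. The only minor point to check is that $d_n \leq 1/2$ eventually so that the entropy bound applies in the stated form, which is automatic since $d_n \to 0$.
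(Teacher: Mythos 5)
Your proof is correct, and it is essentially the argument the paper has in mind: the paper states this lemma without proof as an ``easy observation,'' but the Hamming-ball counting plus pairing-inequality argument you give is precisely the one the paper itself carries out later in \cref{prop:dim_bound}, which generalizes this lemma to $|\dim(Y)-\dim(X)|\le H(d(X,Y))$ (from which $d(X,Y)=0$ gives the conclusion since $H(0)=0$).
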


\begin{theorem} \label{thm:main}
	Let $X\in \Cantor$. Then $\dim(X)=1$ if and only if there is a \textup{ML} random $Y\in \Cantor$ such that $d(X,Y)=0$.
\end{theorem}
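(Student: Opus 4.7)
The converse direction follows immediately from \cref{lem:converse}: a ML-random $Y$ has $\dim(Y)=1$, so if $d(X,Y)=0$ then $\dim(X)=1$ as well. For the forward direction, the plan is to iterate \cref{lem:main} along the dyadic scale and extract a limit by compactness.

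First I would build a sequence $\epsilon_1,\epsilon_2,\dots$ with $\epsilon_1=1$, $\epsilon_{k+1}\in\{\epsilon_k,\epsilon_k/2\}$, and $\epsilon_k\to 0$, using a greedy rule driven by~$X$. For each value $\epsilon$ that might occur in the range of the sequence, let $s(\epsilon)<1$ and $m(\epsilon)$ be the constants supplied by \cref{lem:main}. The rule is: set $\epsilon_{m+1}=\epsilon_m/2$ whenever both $m\ge m(\epsilon_m/2)$ and $\dim(X\rest{[2^m,2^{m+1})}\mid X\rest{2^m})\ge s(\epsilon_m/2)$; otherwise keep $\epsilon_{m+1}=\epsilon_m$. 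Since $\dim(X)=1$, \cref{lem:compression} forces the conditional dimension to tend to $1$, so for any fixed $\epsilon^*>0$ the halving step must eventually fire at every stage, and therefore $\epsilon_k\to 0$. With this schedule in hand, I would inductively produce $\nu_m\in\PP\cap\bits^{2^m}$ with $\nu_m\sim_{\bar\epsilon_m}X\rest{2^m}$; the base case is vacuous because $\epsilon_1=1$. At step $m\to m+1$, \cref{lem:main} applies to $\s=X\rest{2^m}$, $\rho=X\rest{[2^m,2^{m+1})}$, and $\epsilon=\epsilon_{m+1}$: hypotheses~(1) and~(2) hold by the greedy choice of $\epsilon_{m+1}$, and hypothesis~(3) is the inductive hypothesis on $\nu_m$, yielding $\nu_{m+1}$ of length $2^{m+1}$ with $\nu_{m+1}\sim_{\bar\epsilon_{m+1}}X\rest{2^{m+1}}$.

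Because the $\nu_m$'s need not be nested, I would extend each to a full sequence $Y_m\in P$ (possible since $\nu_m\in\PP$) and invoke compactness of $\Cantor$ together with closedness of~$P$ (an intersection of clopen conditions on finite prefixes) to extract a convergent subsequence $Y_{m_i}\to Y$ with $Y\in P$; in particular $Y$ is ML-random. For each fixed $k$, all sufficiently large~$i$ satisfy $Y\rest{2^k}=\nu_{m_i}\rest{2^k}$, so $d(Y\rest{[2^{k-1},2^k)},X\rest{[2^{k-1},2^k)})\le\epsilon_k$. Since $\epsilon_k\to 0$, \cref{lem:distance} gives $d(X,Y)=0$. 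The main difficulty is the coordination between the schedule $\epsilon_k$ and the complexity budget available at each stage: the schedule must descend fast enough to force $d(X,Y)=0$, yet slowly enough that the complexity of $X$'s blocks stays above the $s(\epsilon)$ thresholds demanded by \cref{lem:main}. The greedy rule resolves this by deferring each halving until the conditional dimension on the current block is sufficient, which \cref{lem:compression} guarantees will eventually always happen.
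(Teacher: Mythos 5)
Your overall approach is the same as the paper's: iterate \cref{lem:main} along the dyadic blocks, build a non-increasing schedule $\epsilon_k\to 0$, produce a nonempty set of compatible candidates in $\PP$ of each length $2^m$, and extract a limiting $Y\in P$ by compactness. The converse direction via \cref{lem:converse}, the base case with $\epsilon_1=1$, the use of \cref{lem:compression} to ensure the schedule can vanish, and the compactness extraction with \cref{lem:distance} are all in order.

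There is, however, a genuine gap in the greedy rule for the schedule. At a non-halving step, where you set $\epsilon_{m+1}=\epsilon_m$, hypothesis~(2) of \cref{lem:main} still requires $\dim(X\rest{[2^m,2^{m+1})}\mid X\rest{2^m})\ge s(\epsilon_{m+1})=s(\epsilon_m)$. Your rule never checks this; it only checks $s_m\ge s(\epsilon_m/2)$ at the moment of halving. Since the threshold $s(\epsilon)$ from \cref{lem:main} increases as $\epsilon$ decreases, and since the conditional block dimensions converge to $1$ without being monotone, your rule can halve at a stage where $s_m$ happens to be large, and then at a later stage $m'$ (with the schedule now stuck at the smaller $\epsilon$) have $s_{m'}<s(\epsilon_{m'})$. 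At that point \cref{lem:main} cannot be invoked and the induction breaks. The fix is to make the halving decision look ahead: halve from $\epsilon$ to $\epsilon/2$ at stage $m$ only when $m\ge m(\epsilon/2)$ \emph{and} $\dim(X\rest{[2^{m'},2^{m'+1})}\mid X\rest{2^{m'}})\ge s(\epsilon/2)$ for \emph{all} $m'\ge m$. By \cref{lem:compression} such a stage exists for each value of $\epsilon$, so the schedule is well-defined, still tends to $0$, and now satisfies hypothesis~(2) at every step, halving or not. This non-constructive look-ahead is exactly the implicit content of the paper's stipulation that for every $m$ the triple $(\epsilon_m,s_m,m)$ satisfies the conclusion of \cref{lem:main}.
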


\begin{proof}
One direction is immediate from \Cref{lem:converse}. For the other direction, assume that $\dim (X) =1$. Let
\[
	s_m  = \dim {(X\rest{[2^m,2^{m+1})} \mid X\rest{2^m})}.
\]
Define an infinite sequence $\bar \epsilon = (\epsilon_1,\epsilon_2,\dots)$ such that:
\begin{itemize}
	\item $\epsilon_1 =1$ and $\epsilon_{k+1} \in \{\epsilon_k,\epsilon_k/2\}$ for all $k$,
	\item $\lim_{k\to \infty} \epsilon_k = 0$, and
	\item for all $m$, the triple $\epsilon_m, s_m, m$ satisfies the conclusion of \cref{lem:main} (so if {$s_m$} slowly converges to $1$, we need a sequence $\epsilon_m$ that slowly converges to $0$).
\end{itemize}
We then let 
\[
	Q_m = \left\{ \nu\in \PP\cap \bits^{2^m} \,:\,  \nu \sim_{(\epsilon_1,\dots, \epsilon_m)} X\rest{2^m}  \right\}.
\]
By induction, \cref{lem:main} shows that for all~$m$ the set $Q_m$ is nonempty. Note also that all elements of $Q_m$ have a prefix from $Q_{m-1}$. By compactness, there is a $Y\in P$ such that $Y\rest{2^m}\in Q_m$  for all~$m$. By \Cref{lem:distance}, such~$Y$ is as required.  
\end{proof}

%%%%%%%%%
%%%%%%%%%
\section{{Dimension $s$ sequences and randoms}}\label{sec:dims_random}
%%%%%%%%%
%%%%%%%%%

In this section, we look at the distance between dimension~$s$ sequences and two kinds of randoms. We consider first the density of symmetric difference required to change a sequence of dimension~$s$ into a ML random.
Extending that argument, we then show that every sequence of dimension 
$s$ is coarsely similar to a weakly $s$-random.  
%Finally, we characterize the density that is needed to change a ML random into a sequence of dimension~$s$.

\begin{theorem}\label{thm:generalization}
	For every $X\in \Cantor$ there is a ML-random sequence $Y$ such that \[
	d(X,Y) \le 1/2 - H^{-1}(\dim (X)).
\]
\end{theorem}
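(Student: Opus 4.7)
The plan is to generalise the compactness construction of \cref{thm:main}. I will inductively build nested nonempty sets $Q_k \subseteq \PP$ of prefixes of length $n_k$, each consisting of initial segments of elements of $P$ that are close to $X\rest{n_k}$, and extract $Y \in P$ with $d(X,Y) \le p := 1/2 - H^{-1}(s)$ by compactness (where $s = \dim(X)$; the boundary case $s = 0$ is immediate, since almost every ML-random $Y$ satisfies $d(X,Y) = 1/2 = p$ by a concentration argument). Write $m_k = n_k - n_{k-1}$ for the block lengths.

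The first obstacle is that \cref{lem:compression} is specific to dimension one: for $s < 1$ the chain rule gives only $\dim(X\rest{[n,2n)} \mid X\rest n) \ge 2s-1$, too small to drive the Harper bound of the extension lemma at radius $p$. I would replace the dyadic blocks by an adaptive structure: using the $\liminf$ characterisation of dimension, pick $n_0 < n_1 < \dots$ with $n_{k+1}\ge 2n_k$ and $K(X\rest{n_k}) \le sn_k + n_k/2^k$. At these ``good'' lengths the chain rule yields
\[
	K(X\rest{[n_k, n_{k+1})} \mid X\rest{n_k}) \ge s m_{k+1} - o(m_{k+1}),
\]
recovering the $s$-analog of \cref{lem:compression}. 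The generalised extension lemma then proceeds as in \cref{lem:main}: the ``bad'' set $\widetilde B_k$ of candidate second halves that are farther than $p+\epsilon_{k+1}$ from the set of valid extensions in $\PP$ has size at most $2^{(s-\delta_k) m_{k+1} + o(m_{k+1})}$, where $\delta_k = s - H(1/2 - p - \epsilon_{k+1}) > 0$. Combining with $K(X\rest{n_{k+1}}) \ge sn_{k+1}-o(n_{k+1})$ and the choice of $n_k$, we force $X\rest{[n_k, n_{k+1})}\notin \widetilde B_k$, producing an element of $Q_{k+1}$.

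The second obstacle is that a raw block-density bound $d(\nu\rest{[n_{j-1},n_j)}, X\rest{[n_{j-1},n_j)}) \le p + \epsilon_j$ with $\epsilon_j \to 0$ does not by itself give $\limsup_n d(X\rest n, Y\rest n) \le p$; indeed, for $X = 0^\infty$ and $Y$ with disagreements bunched at the start of each block, one has $\limsup = 2p/(1+p) > p$. I would strengthen the condition defining $Q_k$ so as to require, within each block $[n_{j-1}, n_j)$, that the disagreement density is at most $p + \epsilon_j + o(1)$ on \emph{every} initial subsegment, not merely on the whole block. A Chernoff-style union bound shows that all but a vanishingly small fraction of the Hamming ball of radius $p + \epsilon_j$ around $X\rest{[n_{j-1}, n_j)}$ is uniformly spread in this sense, so the Harper-style cardinality bound on $\widetilde B_k$ is modified only by a subexponential factor and the dimension contradiction still closes.

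The main obstacle I expect is coordinating $(n_k)$, $(m_k)$, and $(\epsilon_k)$ so that the induction goes through uniformly. Each $\epsilon_k$ must tend to $0$ to achieve $d(X,Y)\le p$ precisely, yet $\delta_k$ must dominate every accumulated $o(1)$ slack: the chain-rule error $o(n_{k+1})/m_{k+1}$, the $O((\log n_{k+1})^2) = o(m_{k+1})$ cost of describing the $X$-dependent parameters (using that $n_k \ge 2^k$), and the uniform-spread refinement. A sufficiently slowly decaying, $X$-dependent choice of $\epsilon_k$ threads this needle, and compactness then yields the required $Y$.
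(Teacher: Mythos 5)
Your diagnosis of the two obstacles is right, and the adaptive choice of lengths $n_k$ (at which $K(X\rest{n_k})$ is near $s n_k$) is a legitimate way to recover the $s$-analogue of \cref{lem:compression}. The gap is in your second fix, the uniform-spread refinement, and it is a real gap, not a lacuna.

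The Chernoff-style bound you invoke says that a $1-o(1)$ fraction of the Hamming ball of radius $p+\epsilon$ around a given $\pi$ is uniformly spread. But the step you need is that the (co-c.e.) set $A$ of valid extensions inside $\PP$ meets that uniformly-spread subset. Harper's theorem, in the form of \cref{lem:Harper_corollary}, gives you only that for all but $2^{qm}$ many $\pi$ the ball around $\pi$ meets $A$ at all; it says nothing about $A$ meeting a $1-o(1)$ fraction of the ball, which is what would let you combine it with the Chernoff estimate. And $A$ can genuinely be structured to avoid the uniformly-spread part: for example, it can happen that the set of $\eta$ with $\hat\tau\eta\in\PP$ has its first $\alpha m$ bits constrained to lie in a large but structured co-c.e.\ set (the valid $\PP$-prefixes of length $n_k+\alpha m$), while being free on the remaining $(1-\alpha)m$ bits. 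Applying Harper only to the first $\alpha m$ bits shows that the set of $\pi$ that disagree with \emph{every} such prefix at density $>p$ on those bits has log-size about $H(\tfrac12-p)\alpha m + (1-\alpha)m = \bigl(1-\alpha(1-s)\bigr)m$, which exceeds $sm$ for every $\alpha\in(0,1)$. So the ``bad'' set $\widetilde B_k$ under the strengthened spread condition is not bounded by $2^{(s-\delta_k)m_{k+1}+o(m_{k+1})}$, the dimension contradiction fails to close, and $X$'s next block can lie in $\widetilde B_k$. The slack you get near the start of a block (disagreements there are diluted by the already-built prefix) is only $o(n_k)=o(m_{k+1})$ when $\epsilon_k\to 0$, so it does not rescue the estimate.

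The paper's route avoids this entirely by making the blocks grow \emph{slowly} (size $j^2$, so $m_{j+1}=o(n_j)$), which renders the within-block spike harmless without any spread condition. The price is that the chain rule must be applied across many small blocks; this is handled by \cref{lem:compression2}, and by replacing $\PP$ with the very branchy trees $\PP_{\bar t}$ so that $|A|\ge 2^{j^2-1}$ holds outright. Crucially, the paper then does \emph{not} aim for the fixed per-block distance $p=1/2-H^{-1}(s)$: it lets the per-block target $\delta_j = 1/2+\epsilon_j-H^{-1}(s_j)$ vary with the local conditional complexity $s_j$, and recovers the global bound via the concavity of $x\mapsto 1/2-H^{-1}(x)$ together with \cref{lem:distance2} and \cref{lem:compression2}. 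That convexity step is the ingredient your proposal is missing, and it is what lets the argument sidestep the uniform-spread problem altogether.
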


In the introduction we saw that
{\Cref{thm:generalization} is optimal by considering the case when $X$ is a 
Bernoulli $H^{-1}(s)$-random sequence.

{The proof of \Cref{thm:generalization} requires several modifications to the work we did in the previous section. For one thing,}
\Cref{lem:distance} fails to generalize, as a positive upper density of $X \Delta Y$ may be greater partway through these intervals than at their boundaries. This could be improved by shortening the intervals. {On the other hand, to make something like \Cref{lem:compression} true for dimension $s$ we would need to increase the size of the intervals; shortening them only makes it} fail even worse. The solution is to go ahead and shorten the intervals, but instead of trying to approach a given target symmetric distance slowly and directly, we let the local symmetric difference rise and fall in accordance with the rise and fall of the conditional complexity of each new chunk.  Then a convexity argument will 
let us conclude that the distance between $X$ and $Y$ is small enough.

Letting the size of the intervals grow quadratically achieves a happy medium. 
In this and all following constructions,
 the $j$th chunk has size $j^2$ and the first $j$ chunks have
concatenated length of $n_j := \sum_{i<j} i^2$, so that $n_j + j^2 = n_{j+1}$.  
There was plenty of freedom in choice of chunk growth rate,
but we must choose something satisfying these conditions: 
We need $n_{j+1}-n_j \ll n_j$ so that the impact of new chunks on
the density of symmetric difference and effective dimension goes to zero in the limit.  And although we could get away with somewhat less,
we also want $j\log j \ll n_{j+1}-n_j$ in order to have room to fit a description of $\overline{\delta}$, which we define next.
 
To replace the set $E$ of descending sequences of $\epsilon_i$, we use the set $D$ of finite sequences $\overline{\delta} = (\delta_1,\dots,\delta_m)$ such that each $\delta_j$ is a fraction of the form $k/j$ for some positive integer $k\leq j$. As before, we write $\sigma \sim_{\overline{\delta}} \tau$ if for each $j$,
 \[d(\sigma\rest{[n_j,n_{j+1})}, \tau\rest{[n_j,n_{j+1})}) < \delta_j.\]

By \cref{thm:main}, it suffices to construct $Y$ of effective dimension 1.  So
 instead of staying inside a tree of randoms, in this and all following 
constructions we stay in trees of the following type.

Given a sequence $\bar t$ of numbers in the unit interval, let 
$$P_{\bar t} = \{Y : (\forall i) \, \dim(Y\rest{[n_i,n_{i+1})} \mid Y\rest{n_i}) \geq t_i\}.$$
Let $\PP_{\bar t}$ denote the set of initial segments of $P_{\bar t}$.
For this first argument, we let $\bar t$ be $\bar 1$, the sequence of all 1s. 
By \cref{lem:compression2} below, if $Y \in P_{\bar 1}$, then $\dim(Y) = 1$.

The following lemma plays the role of \Cref{lem:Harper_corollary} and \Cref{lem:main}.

\begin{lemma}\label{lem:main2}
For every $\epsilon>0$, there is an $N$ such that for every $j\geq N$, 
$\s \in \bits^{n_j}$, $\rho \in \bits^{j^2}$, $\bar \delta \in D$, 
and $\delta = k/j$, if {$\delta > \epsilon$} and
\begin{enumerate}
\item $\dim(\rho\mid\sigma) \geq H(1/2 + \epsilon - \delta)$ and
\item $\sigma \sim_{\overline{\delta}} \tau$ for some $\tau \in \PP_{\bar 1}$,
\end{enumerate}
then there is a $\nu \in \PP_{\bar 1}$ such that $\sigma\rho \sim_{(\overline{\delta}, \delta)} \nu$.
\end{lemma}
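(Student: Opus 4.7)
I would closely follow the structure of the proof of \cref{lem:main}. Given $\sigma, \rho, \bar\delta, \delta, \epsilon$ as in the hypotheses, set
\[T = \{\hat\tau \in \PP_{\bar 1} \cap \bits^{n_j} : \hat\tau \sim_{\bar\delta} \sigma\},\]
which is nonempty by (2), and let $A = \{\eta \in \bits^{j^2} : \exists\, \hat\tau \in T,\ \hat\tau\eta \in \PP_{\bar 1}\}$ (co-c.e.\ uniformly in $\sigma, \bar\delta$) and $B = \{\pi \in \bits^{j^2} : d(\pi, A) \geq \delta\}$ (c.e.\ uniformly in $\sigma, \bar\delta, \delta$). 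The goal is to show $\rho \notin B$: then some $\eta \in A$ lies within distance strictly less than $\delta$ of $\rho$, and picking any $\hat\tau \in T$ with $\hat\tau\eta \in \PP_{\bar 1}$ produces $\nu = \hat\tau\eta \in \PP_{\bar 1}$ satisfying $\sigma\rho \sim_{(\bar\delta, \delta)} \nu$.

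To lower-bound $|A|$, I apply Kraft's inequality to the prefix-free conditional complexity $K(\cdot \mid \hat\tau)$ for any fixed $\hat\tau \in T$: at most $2^{j^2 - 1}$ strings $\eta \in \bits^{j^2}$ satisfy $K(\eta \mid \hat\tau) \leq j^2 - 1$, so the set of $\eta$ with $\hat\tau\eta \in \PP_{\bar 1}$ (equivalently $K(\eta \mid \hat\tau) \geq j^2$) has at least $2^{j^2 - 1}$ elements. Hence $|A| \geq 2^{j^2 - 1}$. I now invoke Harper's theorem to replace $A, B$ by spheres $\hat A, \hat B$ of the same sizes, centred at $0^{j^2}$ and $1^{j^2}$ respectively, with $d(\hat A, \hat B) \geq d(A, B) \geq \delta$. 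A Taylor expansion of $H$ around $1/2$ together with $|\hat A| \geq 2^{j^2 - 1}$ forces the radius $r_A$ of $\hat A$ to satisfy $r_A \geq 1/2 - O(1/j)$, and the sphere geometry $d(\hat A, \hat B) \geq 1 - r_A - r_B - O(1/j^2)$ then yields $r_B \leq 1/2 - \delta + O(1/j)$ (in the main case $\delta \leq 1/2$; when $\delta > 1/2$ the Hamming ball of radius $1/2$ around $\rho$ has strictly more than $2^{j^2-1}$ elements and must intersect $A$ by pigeonhole, so $\rho \notin B$ directly). Consequently $|B| \leq 2^{H(1/2 - \delta) j^2 + o(j^2)}$, with the error uniform over $\delta \in [\epsilon, 1/2]$.

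If $\rho \in B$, enumerating $B$ from the conditioning data gives $K(\rho \mid \sigma) \leq \log|B| + O(j \log j)$; the $O(j \log j)$ term pays for $\bar\delta$ (of length $j-1$, each entry an integer fraction of denominator at most $j$) and $\delta$ itself---this is exactly why the chunk size was chosen to grow like $j^2$, so that $O(j \log j) = o(j^2)$. Hence $\dim(\rho \mid \sigma) \leq H(1/2 - \delta) + o(1)$. Since $H$ is strictly increasing on $[0, 1/2]$, $H(1/2 - \delta) < H(1/2 + \epsilon - \delta)$ whenever $\delta > \epsilon$, and this gap is uniformly bounded below on the compact interval $\delta \in [\epsilon, 1/2]$; so for $j$ exceeding a threshold $N = N(\epsilon)$, the inequality contradicts hypothesis (1), forcing $\rho \notin B$.

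The main obstacle I anticipate is quantifying the Harper-sphere asymptotics carefully enough that the $o(1)$ error in $\dim(\rho \mid \sigma)$ is genuinely uniform in $\delta \in [\epsilon, 1/2]$. The case $\delta$ close to $1/2$ is the most delicate, because $H$ has infinite derivative at $0$, so a small perturbation of $r_B$ there translates into a comparatively larger perturbation of $H(r_B)$; a secondary concern is verifying that the enumerations defining $A$ and $B$ are uniformly computable in the stated parameters, which requires unpacking the co-c.e.\ structure of $\PP_{\bar 1}$.
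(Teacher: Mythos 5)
Your proof is correct and follows essentially the same route as the paper's (the paper's proof is in fact much terser, simply defining $A$ and $B$ as in the earlier lemma, invoking Harper's theorem to get $\log|B|\leq H(1/2-\delta)j^2$, noting the $O(j\log j)$ cost of encoding $\bar\delta$, and observing that the gap $H(1/2+\epsilon-\delta)-H(1/2-\delta)$ is bounded below uniformly in $\delta$). Your added handling of the $\delta>1/2$ edge case and the explicit sphere-radius estimates are sound; just note that for $j^2$ odd the ball of normalized radius $1/2$ has exactly $2^{j^2-1}$ elements rather than strictly more, so the pigeonhole step there should lean on the strict bound $|A|>2^{j^2-1}$ instead.
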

\begin{proof}
The sets $A$ and $B$ are defined the same as in the proof of \Cref{lem:main}:
\[
A = \{\eta \in \bits^{j^2} : \exists \hat \tau \in \PP_{\bar 1}\;( \hat \tau \sim_{\bar \delta} \sigma \text{ and } \hat \tau \eta \in \PP_{\bar 1})\},
\]
and $B = \{ \pi \in \bits^{j^2} : d(\pi, A) \geq \delta\}$. 
Now use Harper's Theorem to show that because
$|A| > 2^{j^2-1}$, we have $\log |B| \leq H(1/2-\delta)j^2$.  {Therefore, 
relative to $\sigma$,
codes for elements of $B$ can be given which have length 
$H(1/2 - \delta)j^2 + O(j\log j)$, where $O(j\log j)$ bits 
are used to describe $\bar \delta$.
The difference $H(1/2+\epsilon-\delta) - H(1/2 - \delta)$ 
is bounded away from zero by a fixed amount that 
does not depend on $\delta$.  Therefore, there is an $N$ large enough that 
codes for elements of $B$ are shorter than $H(1/2+\epsilon-\delta)j^2$, 
where the choice of $N$ does not depend on $\delta$.}
\end{proof}

In the construction itself, we define an infinite sequence 
$(\epsilon_1,\epsilon_2,\dots)$ so that $\epsilon_i \rightarrow 0$ 
and $N=i$ witnesses \Cref{lem:main2} for $\varepsilon_i$.  Then we define 
\begin{equation}\label{s_j} 
s_j = \dim{(X\rest{[n_j,n_{j+1})} \mid X\rest{n_j})}
\end{equation}
and finally $\delta_j = 1/2 + \epsilon_j - H^{-1}(s_j) + O(1/j)$ to obtain the sequence 
$(\delta_1,\delta_2,\dots)$ to be used for the definition of $Q_j$ and the proof 
that the $Q_j$ are nonempty.  This produces $Y \in P_{\bar 1}$ with 
$X \sim_{\bar \delta} Y$.  It remains only to examine the relationship 
between the $s_j$, the $\delta_j$, $\dim(X)$, and $d(X,Y)$
for $X \sim_{\overline{\delta}} Y$.  The following variations on 
\Cref{lem:distance} and \Cref{lem:compression} are well-known.

\begin{lemma}\label{lem:distance2}
For $X,Y \in \Cantor$, letting $\delta_i = d(X\rest{[n_i,n_{i+1})}, Y\rest{[n_i,n_{i+1})})$,
\[
d(X,Y) = \limsup_{j\rightarrow\infty} \frac{1}{n_j}\sum_{i=1}^{j-1} \delta_i i^2.
\]
\end{lemma}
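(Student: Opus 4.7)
The plan is to reduce the $\limsup$ over all $n$ in the definition of $d(X,Y)$ to a $\limsup$ along the subsequence $n_j$, where an exact equality with the stated sum holds by a direct counting argument.

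First I would observe the exact identity at the breakpoints: by definition of $\delta_i$, the block $X\rest{[n_i,n_{i+1})}$ and $Y\rest{[n_i,n_{i+1})}$ disagree on exactly $\delta_i i^2$ positions, so
\[
d(X\rest{n_j},Y\rest{n_j}) \;=\; \frac{1}{n_j}\sum_{i=1}^{j-1} \delta_i i^2.
\]
Thus it suffices to show that
\[
\limsup_{n\to\infty} d(X\rest{n},Y\rest{n}) \;=\; \limsup_{j\to\infty} d(X\rest{n_j},Y\rest{n_j}).
\]

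For this, I would bound the oscillation of $d(X\rest{n},Y\rest{n})$ inside a single block. Fix $j$ and $n_j \le n < n_{j+1}$, let $D_j = \sum_{i<j}\delta_i i^2$ be the number of disagreements in $X\rest{n_j}$ versus $Y\rest{n_j}$, and let $D'\le n-n_j < j^2$ be the number of disagreements in $[n_j,n)$. Then
\[
d(X\rest{n},Y\rest{n}) - d(X\rest{n_j},Y\rest{n_j}) \;=\; \frac{D_j+D'}{n} - \frac{D_j}{n_j} \;=\; \frac{D_j(n_j-n)}{n\,n_j} + \frac{D'}{n}.
\]
Using $D_j\le n_j$ and $D'< j^2$, and the fact that $n\ge n_j$, the absolute value of the right-hand side is bounded by $2(n_{j+1}-n_j)/n_j = 2j^2/n_j$. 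Since $n_j = \sum_{i<j} i^2 \sim j^3/3$, this is $O(1/j)$, and in particular tends to $0$ uniformly in $n \in [n_j,n_{j+1})$.

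It follows that $d(X\rest{n},Y\rest{n})$ and $d(X\rest{n_j},Y\rest{n_j})$ have the same $\limsup$, proving the lemma. There is no real obstacle here: the argument is entirely a routine accounting exercise, and the only thing that needs to be checked is that the block-length ratio $(n_{j+1}-n_j)/n_j$ vanishes in the limit, which is immediate from the cubic growth of $n_j$.
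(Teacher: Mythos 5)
Your proof is correct and follows essentially the same route as the paper: the paper's one-line proof simply asserts that ``since $n_j$ grows slowly enough, both are equal to $\limsup_j d(X\rest{n_j},Y\rest{n_j})$,'' and your argument is the straightforward expansion of that assertion, making explicit the exact identity at the breakpoints and the $O(j^2/n_j)$ bound on the oscillation within a block.
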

\begin{proof}
Since $n_j$ grows slowly enough, both are equal to $\displaystyle\limsup_{j\rightarrow\infty} d(X\rest{n_j}, Y\rest{n_j})$.
\end{proof}

\begin{lemma}\label{lem:compression2}
For $X \in \Cantor$, letting $s_j$ be defined as in (\ref{s_j}),
\[
\dim(X) = \liminf_{j\rightarrow\infty} \frac{1}{n_j}\sum_{i=1}^{j-1} s_i i^2.
\]
\end{lemma}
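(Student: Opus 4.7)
The plan is to mirror the strategy from the proof of \Cref{lem:compression}, using the complexity of pairs formula to decompose $K(X\rest{n_j})$ into a sum of conditional complexities of consecutive chunks. The prefix-free symmetry of information gives, for each $i$,
\[
K(X\rest{n_{i+1}}) = K(X\rest{n_i}) + K(X\rest{[n_i,n_{i+1})} \mid X\rest{n_i}) + O(\log n_{i+1}),
\]
and telescoping from $i = 1$ to $j-1$ yields
\[
K(X\rest{n_j}) = \sum_{i=1}^{j-1} s_i\, i^2 + O(j\log n_j).
\]

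Since $n_j = \Theta(j^3)$, the accumulated error satisfies $j\log n_j = O(j\log j) = o(n_j)$. Dividing by $n_j$,
\[
\frac{K(X\rest{n_j})}{n_j} \;=\; \frac{1}{n_j}\sum_{i=1}^{j-1} s_i\, i^2 + o(1),
\]
so taking $\liminf$ over $j$ on the left matches the right-hand side of the desired identity.

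It then remains to verify that $\liminf_j K(X\rest{n_j})/n_j = \liminf_n K(X\rest{n})/n = \dim(X)$, i.e., that sampling initial-segment complexity along the sparse subsequence $(n_j)$ already computes $\dim(X)$. For any $n$ with $n_j \le n < n_{j+1}$, we have $|K(X\rest{n}) - K(X\rest{n_j})| \le j^2 + O(\log n)$ (the extra bits can be described explicitly in one direction, and the shorter prefix is computable from the longer one given its length) and $|n - n_j| < j^2 = o(n_j)$. Expanding $K(X\rest{n})/n - K(X\rest{n_j})/n_j$ and using $K(X\rest{n_j}) \le n_j + O(\log n_j)$ gives a quantity that is $o(1)$ uniformly in $n$ within the $j$th chunk, so the two $\liminf$s agree.

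The main technical point is bookkeeping the logarithmic error terms from the symmetry of information and the intra-chunk fluctuation of $K(X\rest{n})/n$. Both are absorbed precisely because the chunk sizes $j^2$ grow much more slowly than $n_j = \Theta(j^3)$, so that the cumulative telescoping error and the intra-chunk fluctuation are each $o(n_j)$. This is exactly the trade-off flagged at the start of the section: the chunks must be small relative to $n_j$ so that adding or subtracting one chunk's worth of bits has vanishing effect on dimension.
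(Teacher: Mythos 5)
Your proof is correct and follows the same strategy as the paper's (which is stated tersely, just noting "one direction is immediate; the other uses $j$ applications of symmetry of information" and that $j\log j \ll n_j$): telescope $K(X\rest{n_j})$ via symmetry of information into $\sum_{i<j} s_i i^2 + O(j\log j)$, divide by $n_j$, and observe the error vanishes since $n_j = \Theta(j^3)$. You spell out the additional point that the $\liminf$ along the subsequence $(n_j)$ coincides with the full $\liminf$ because chunk size $j^2 = o(n_j)$, which the paper compresses into the phrase "since $n_j$ grows slowly enough."
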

\begin{proof}
Since $n_j$ grows slowly enough, it suffices to show 
\[
K(\sigma) = \sum_{i=1}^{j-1} s_i i^2 + o(n_j)
\] considering only $\s$ of length $n_j$.  One direction is immediate; the 
other uses $j$ applications of symmetry of information and the fact that 
$j\log j \ll n_j$ (since we condition only on $X\rest{n_j}$ and not its code, 
each application of symmetry of information introduces 
an error of up to $\log n_j \approx \log j$.)
\end{proof}

In our case, with $X\sim_{\overline{\delta}} Y$, we have 
\[
\delta_i = 1/2 + \epsilon_i - H^{-1}(s_i) \geq d(X\rest{[n_i,n_{i+1})}, Y\rest{[n_i,n_{i+1})}).
\]
so 
\[
d(X,Y) \le \limsup_{j\rightarrow\infty} \frac{1}{n_j}\sum_{i=1}^{j-1} \delta_i i^2
=\limsup_{j\rightarrow\infty} \frac{1}{n_j}\sum_{i=1}^{j-1} (1/2-H^{-1}(s_i)) i^2
\]
because $\epsilon_i\rightarrow 0$.  By \Cref{lem:compression2} and the concavity of $1/2-H^{-1}(x)$, this is bounded by $1/2 - H^{-1}(\dim(X))$, as required.  This finishes the proof of \Cref{thm:generalization}.

\bigskip

This method can be extended to answer a question that was suggested to us by
M.\ Soskova.  Recall that a sequence $X$ is called weakly $s$-random 
if it satisfies $K(X\rest{n}) \geq sn -O(1)$.

\begin{theorem}\label{thm:dims_srandom}
Every sequence of effective Hausdorff dimension $s$ is coarsely equivalent to a weakly $s$-random.
\end{theorem}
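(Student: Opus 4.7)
My plan is to adapt the chunk-based compactness construction used in the proof of \cref{thm:generalization}, replacing the target tree $P_{\bar 1}$ with one whose infinite paths are weakly $s$-random. The natural candidate is $P^s_c = \{Y \in \Cantor : (\forall n)\ K(Y\rest n) \geq sn - c\}$ for a constant $c$ to be chosen; the set $\PP^s_c$ of extendible initial segments is a co-c.e.\ tree whose paths are exactly the weakly $s$-randoms with deficiency at most $c$. As in \cref{thm:generalization}, the construction proceeds chunk-wise with the $j$th chunk of length $j^2$, maintaining $Y \rest{n_j} \in \PP^s_c$ at each stage and picking a close chunk-extension at each step.

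The main steps are analogs of \cref{lem:number_of_extensions} and \cref{lem:main2} for this tree. For the extension count, if $\sigma \in \PP^s_c$ has length $n$, then by symmetry of information an extension $\sigma\eta$ has $K(\sigma\eta) \geq K(\sigma) + K(\eta\mid\sigma) - O(\log n)$, and applying Kraft's inequality to each intermediate prefix length $m \in (n,2n]$ shows that only a $2^{-c + O(1)}$ fraction of $\eta\in\bits^n$ can cause the prefix-complexity bound to fail anywhere. Taking $c$ to be a large enough constant produces a set of valid chunk-extensions of size at least $2^{j^2 - 1}$. The Harper-theorem step then estimates the set $B$ of length-$j^2$ strings at distance $> \delta_j$ from the valid-extension set as $|B| \leq 2^{H(1/2 - \delta_j) j^2}$, exactly as in \cref{lem:main2}.

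The main obstacle is choosing the sequence $\delta_j$ so that $d(X,Y) = 0$. The Harper bound alone yields $\delta_j \geq 1/2 - H^{-1}(s_j) + o(1)$, which averages via the convexity of $H^{-1}$ (combined with \cref{lem:compression2}) to the positive quantity $1/2 - H^{-1}(s)$, precluding coarse equivalence. The resolution requires exploiting the greater permissiveness of $\PP^s_c$ compared to the dimension-$1$ tree of \cref{thm:generalization}: for ``most'' chunks we expect to take $\delta_j = 0$ directly, setting $Y$'s next chunk equal to $X$'s chunk whenever $\sigma \cdot X\rest{[n_j,n_{j+1})} \in \PP^s_c$, and invoking the Harper estimate only on a sparse subset. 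The crux is establishing a density-zero bound on this exceptional set, which is subtle because $\dim(X) = s$ only gives $K(X\rest n) \geq (s-\epsilon)n$ for large $n$, not $K(X\rest n) \geq sn - c$ for fixed $c$. The argument will likely need to allow $c$ to grow slowly along the construction, or to interleave density-zero perturbations of $X$ that inject complexity at precisely those prefix lengths where $X$'s own complexity dips below $sn - c$. A concluding convexity argument, analogous to the one ending the proof of \cref{thm:generalization} but now using \cref{lem:distance2} to bound $\limsup_j \frac{1}{n_j}\sum_{i<j} \delta_i i^2$ by the density of the exceptional set, would then give $d(X,Y)=0$.
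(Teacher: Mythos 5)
Your approach---staying in the fixed-deficiency tree $\PP^s_c$ and copying $X$'s chunks whenever possible---has two real gaps, and the paper takes a different route around both. First, your extension-count claim (that only a $2^{-c+O(1)}$ fraction of chunk-extensions of $\sigma\in\PP^s_c$ leave $\PP^s_c$) fails for nodes $\sigma$ whose complexity sits near the floor $s|\sigma|-c$: the $O(\log|\sigma|)$ error terms from symmetry of information are then not absorbed by any buffer, so the fraction of surviving chunk-extensions is controlled by $2^{O(\log|\sigma|)}\sum_{k\ge 1} 2^{-(1-s)k}$ rather than by $2^{-c}$, and it need not exceed $1/2$. Harper's Theorem then gives no useful bound on $|B|$ because the set $A$ of available extensions may be far smaller than half of $\bits^{j^2}$. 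This is exactly the obstruction the paper flags when it remarks that simply staying in a tree of $s$-randoms ``does not provide a strong enough lower bound on the number of possible extensions to use Harper's theorem, since we now need for more than half of the possible strings of a certain length to be available at each stage.'' Second, even granting that step, you acknowledge that you cannot bound the density of exceptional chunks on which $X$ cannot be copied, and your two proposed remedies (letting $c$ grow slowly, or injecting complexity at the levels where $K(X\rest n)$ dips below $sn-c$) are left undeveloped and do not obviously combine with the compactness construction while keeping $d(X,Y)=0$.

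The paper circumvents both problems with one structural move. Instead of targeting fixed-deficiency weak $s$-randomness, it targets $P_{\bar t}$ where $t_j = M(s_j,\epsilon_j)+O(1/j)$ bounds the \emph{conditional} dimension of the $j\tth$ chunk and $\epsilon_j\to 0$ slowly. Since each $t_j<1$, every node of $\PP_{\bar t}$ retains more than half of all chunk-extensions, so Harper's Theorem (via \cref{lem:main3}) gives a good bound on $|B|$ at every stage, with no exceptional nodes. The distance budget is $\delta_j=2\epsilon_j\to 0$, giving $d(X,Y)=0$ directly, without ever copying $X$ or estimating a density of bad chunks. The weak $s$-randomness of the resulting $Y$ is then extracted from \cref{lem:buffer}: for a sufficiently slowly decreasing $\bar\epsilon$, the per-chunk targets $M(s_j,\epsilon_j)$ exceed $s_j$ by enough that $\sum_{i\le j} M(s_i,\epsilon_i)\,i^2 - cj^2 > sn_j - b$, and the margin $cj^2$ both absorbs the $O(j\log j)$ accumulated from repeated use of the complexity of pairs formula and covers mid-chunk complexity dips. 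This buffer-accumulation lemma is the key idea your proposal is missing.
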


Simply staying in a tree of $s$-randoms
(direct generalization of \cref{thm:main}) does not provide a strong enough 
lower bound on the number of possible extensions to use Harper's theorem, 
since we now need for more than half of the possible strings of a certain 
length to be available at each stage. 
To ensure this, we instead require our constructed
sequence to buffer its complexity above the level needed to be $s$-random. 
Such a strategy was not a possibility in the 1-random case.  

In the case of finite strings, Harper's Theorem can be used to show,
essentially, that if $x$ is a string of dimension $s$, then by 
changing it on $\epsilon$ fraction of its bits, its dimension can be 
increased to 
$$M(s,\epsilon) := H(\min(1/2, H^{-1}(s) + \epsilon)).$$
We adapt the standard
method to prove the 
following lemma, Case 2 of which is identical to \cref{lem:main2}.

\begin{lemma}\label{lem:main3}
For all $\epsilon$, there is an $N$ such that for all $j\geq N$, all 
$\sigma\in \bits^{n_j}$ and $\rho\in \bits^{j^2}$, 
all $\bar \delta \in D$, $\bar t \in D$ of length $j$, and all $\delta$, if 
\begin{enumerate}
\item $(\bar \delta, \delta) \in D$ and $\delta\geq\varepsilon$ and
\item there is a $\tau \in \PP_{\bar t}$ with $\tau \sim_{\bar \delta} \sigma$,
\end{enumerate}
then,
letting $s = \dim(\rho \mid \sigma)$ and $t = M(s, \delta-\varepsilon) + O(\frac{1}{j})$,
there is a $\nu \in \PP_{(\bar t, t)}$ 
with $\nu \sim_{(\bar \delta, \delta)} \sigma\rho$.
\end{lemma}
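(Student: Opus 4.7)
The plan is to closely follow the proof of \cref{lem:main2}. Case~2 (when $M(s,\delta-\varepsilon)=1$, equivalently $H^{-1}(s)+\delta-\varepsilon\geq 1/2$) essentially is \cref{lem:main2}: one relativizes its argument from $\PP_{\bar 1}$ to $\PP_{(\bar t,1)}$, and the proof of \cref{lem:number_of_extensions} goes through unchanged to supply $2^{j^2-O(\log j)}$ many valid next chunks. So the new content is Case~1, where $t=M(s,\delta-\varepsilon)<1$; there the idea is to replace the appeal to \cref{lem:number_of_extensions} by the direct counting bound $|\{\eta:K(\eta\mid\hat\tau)<tj^2\}|\leq 2^{tj^2}$, which already produces exponentially many valid extensions when $t<1$.

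Mirroring the structure of \cref{lem:main2}, define
\[
A=\{\eta\in\bits^{j^2} : (\exists\hat\tau\in\PP_{\bar t}\cap\bits^{n_j})\,\hat\tau\sim_{\bar\delta}\sigma \andd K(\eta\mid\hat\tau)\geq tj^2\}
\]
and $B=\{\pi:d(\pi,A)\geq\delta\}$, noting that $B$ is c.e.\ uniformly in $\sigma,\bar\delta,\bar t$. For the $\hat\tau$ of hypothesis~(2) the counting bound yields $|A|\geq 2^{j^2}-2^{tj^2}\geq 2^{j^2-1}$, provided $t\leq 1-1/j$ (which is absorbed into the $O(1/j)$ slack allowed in the definition of $t$). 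I then place Harper spheres $\hat A$ around $0^{j^2}$ and $\hat B$ around $1^{j^2}$ of sizes $|A|$ and $|B|$ respectively: since $|A^c|\leq 2^{tj^2}$, the inner radius of $\hat A$ is at least $1-H^{-1}(t)-O(1/j)$, and the constraint $d(\hat A,\hat B)\geq\delta$ forces $\hat B$ to have inner radius at most $H^{-1}(t)-\delta+O(1/j)$. Using the defining identity $H^{-1}(t)=H^{-1}(s)+\delta-\varepsilon$ of Case~1, this becomes $H^{-1}(s)-\varepsilon+O(1/j)$, whence $|B|\leq 2^{H(\max(0,H^{-1}(s)-\varepsilon))j^2+O(\log j)}$.

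To derive $\rho\notin B$, use that $B$'s c.e.\ uniformity gives each $\pi\in B$ a description relative to $\sigma$ of length $\log|B|+O(j\log j)$, the extra cost encoding $\bar\delta$ and $\bar t$ (length-$j$ sequences of rationals with denominator at most $j$). When $H^{-1}(s)\geq\varepsilon$, strict monotonicity of $H$ on $[0,1/2]$ produces the uniform positive gap $s-H(H^{-1}(s)-\varepsilon)$, bounded below by a positive quantity depending only on $\varepsilon$; its worst case at $s=1$ is $\Theta(\varepsilon^2)$ since $H''(1/2)<0$. So for $j$ large enough depending only on $\varepsilon$, description lengths fall below $sj^2-o(j^2)\leq K(\rho\mid\sigma)$, forcing $\rho\notin B$ and yielding some $\eta\in A$ with $d(\eta,\rho)<\delta$. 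When $H^{-1}(s)<\varepsilon$ the Harper bound degenerates, but a direct volume count rescues the argument: $|B_\delta(\rho)|\geq 2^{H(\delta)j^2-O(\log j)}$ strictly exceeds $2^{tj^2}=2^{H(\delta-\varepsilon)j^2}$, so $B_\delta(\rho)$ contains some $\eta$ with $K(\eta\mid\hat\tau)\geq tj^2$. Either way, the witnessing $\hat\tau$ combines with the chosen $\eta$ to give $\nu=\hat\tau\eta\in\PP_{(\bar t,t)}$ with $\sigma\rho\sim_{(\bar\delta,\delta)}\nu$. The main technical obstacle is verifying the complexity gap uniformly in $s$; the $s\to 1$ regime, where $H'(1/2)=0$ makes the gap only $\Theta(\varepsilon^2)$, is the delicate point and requires the quadratic estimate to survive the $O(\log j/j)$ error term for $j\geq N(\varepsilon)$.
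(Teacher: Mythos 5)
Your plan tracks the paper closely: same $A$ and $B$, same use of Harper spheres plus a compression/counting argument, and the same compactness-in-$(s,\delta)$ device to get a uniform $N(\epsilon)$. Two points differ from the paper, and one of them is a genuine gap.

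First, a minor point on Case~2. The paper does not invoke \cref{lem:number_of_extensions} here (nor in \cref{lem:main2}): for the chunked classes $\PP_{\bar t}$ the extendibility condition is local, so the bound $|A|\ge 2^{j^2}-2^{tj^2-1}\ge 2^{j^2-1}$ follows directly from Kraft's inequality applied to $K(\cdot\mid\hat\tau)$. Relativizing \cref{lem:number_of_extensions} would also require paying for a description of the parameter $\bar t$ (not a fixed co-c.e.\ class as $\PP$ is), and would give the weaker $2^{j^2-O(\log j)}$. This is a detour but not an error.

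The genuine gap is in your handling of the degenerate sub-case of Case~1. You write that when $H^{-1}(s)<\varepsilon$, the volume of $B_\delta(\rho)$ ``strictly exceeds $2^{tj^2}=2^{H(\delta-\varepsilon)j^2}$.'' That equality is false unless $s=0$: by definition $t=H\bigl(H^{-1}(s)+\delta-\varepsilon\bigr)$, and as $H^{-1}(s)\to\varepsilon^-$ we have $t\to H(\delta)$, so the gap $H(\delta)-t$ that your volume argument relies on shrinks to $0$. Consequently there is no $N$ depending only on $\varepsilon$ that makes the ball-count beat $2^{tj^2+O(\log j)}$ throughout the whole regime $H^{-1}(s)<\varepsilon$; your two sub-cases do not cover $[0,1]$ uniformly because they meet at a boundary where both of your lower bounds on the respective gaps degenerate. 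The fix, which is what the paper's $\varepsilon/4$ slack in $q$ and the $\pm\varepsilon/2$ in the case-split thresholds are doing, is to make the two regimes overlap: e.g.\ use the volume/Harper-emptiness argument when $H^{-1}(s)\le\varepsilon/2$ (so the gap is at least $H(\delta)-H(\delta-\varepsilon/2)$, bounded below by compactness in $\delta$), and use the compression argument when $H^{-1}(s)\ge\varepsilon/2$ (so the gap $s-H\bigl(\max(0,H^{-1}(s)-\varepsilon)\bigr)$ is at least $\min\{H(\varepsilon/2),\inf_{u\ge\varepsilon}(H(u)-H(u-\varepsilon))\}>0$). With either a shifted threshold or overlapping cases your argument is correct; as written, the threshold at exactly $\varepsilon$ is not.

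As a small further note, the discretisation error you track as $O(1/j)$ should really be something like $O(\sqrt{\log j}/j)$ near the top (because $H^{-1}$ has vertical tangent at $1$), but this is still $o(1)$ and does not affect the conclusion.
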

\begin{proof}
Like before, let 
\[
A = \{ \eta \in \bits^{j^2} : \exists \hat\tau \sim_{\bar \delta} \sigma \;( 
\hat\tau\eta \in \PP_{(\bar t, t)})\}
\]
and $B = \{\pi \in \bits^{j^2} : d(\pi, A) \geq \delta\}.$

{\bf Case 1.}  Suppose that $0\leq s \leq H(1/2 - \delta - \varepsilon/2)$.  The upper bound is 
chosen so that $t$ is bounded below 1 for $s$ in this interval.  Since $\tau$ exists,
by considering only extensions of it, we can bound $|A| > 2^{j^2} - 2^{tj^2}$.  
Letting $q = M(s, \delta-\varepsilon + \varepsilon/4)$ (note this $q$ is chosen so that 
$t < q < 1$),
for sufficiently large $j$ we have 
\begin{equation}\label{eqn:A_bound}
|A| > 2^{j^2}- V(j^2,H^{-1}(q)),
\end{equation}
 where $V(n,r)$ is the size of a sphere 
of radius $rn$ in $2^n$ (this uses the lower bound for $V(n,r)$ 
mentioned in the introduction). 
How large $j$ has to be for this bound to hold depends on the size of $q-t$,
which in general varies with $s$ and $\delta$. 
But since $q>t$ for all $s,\delta$ with $\varepsilon \leq \delta \leq 1$ and $s$ in the closed 
interval associated to this case, compactness allows us to bound $q-t$ 
away from 0 by a quantity that depends 
only on $\epsilon$.  Assuming $j$ is large enough for (\ref{eqn:A_bound}) to hold,
 Harper's Theorem tells us that 
$$\log |B| \leq H(H^{-1}(q) - \delta)j^2 = H(H^{-1}(s) -\epsilon + \epsilon/4)j^2$$
so $B$ is either empty, or everything in $B$ can be compressed
(relative to $\sigma$) to length 
$H(H^{-1}(s) -3\epsilon/4)j^2 + O(j\log j)$,
(where the $O(j \log j)$ is enough to code the 
parameters $\bar \delta$ and $\bar t$ needed to define $B$ as a 
c.e. set),
and for large enough $j$ the code length is less than $sj^2$.  How large $j$ has to be
depends on the difference $s-H(H^{-1}(s) -3\epsilon/4)$, but 
this can again be bounded in a way that depends only on $\epsilon$.  So 
for sufficiently large $j$, if the conditions 
are satisfied, then $\rho \not\in B$, and the lemma holds.

{\bf Case 2.}  Suppose that $H(1/2 - \delta + \epsilon/2) \leq s \leq 1$.  
Because $\tau$ exists, $|A| \geq 2^{j^2 -1}$, so 
$\log |B| \leq H(1/2-\delta)j^2$ (this is true regardless of the choice of $s$).
Either $B$ is empty, or
this allows the construction of a similar code,
with the needed largeness of $j$ determined by $s-H(1/2-\delta)$, 
which is bounded away from 0 by 
$H(1/2-\delta+\epsilon/2)-H(1/2-\delta)$ for all $s \geq H(1/2 - \delta+ \epsilon/2)$.
For $\delta \in [\varepsilon, 1/2]$, this bound is strictly positive, so by 
compactness there is a uniform lower bound depending only on $\epsilon$. 

Choosing $N$ large enough that $j\geq N$ works for both
cases finishes the proof.
\end{proof}

Now given an $X$ with $\dim(X) = s$, 
define the sequence $s_j$ as in (\ref{s_j}).  Suppose also that an 
infinite sequence $\bar \epsilon \rightarrow 0$ is given, decreasing 
slowly enough that $N=i$ satisfies the previous lemma for $\epsilon_i$.  
Define the infinite sequence $\bar \delta$ by $\delta_i = 2\epsilon_i$, and 
define $\bar t$ by letting $t_j$ be $M(s_j, \delta_j-\epsilon_j) = M(s_j,\epsilon_j)$ rounded up to the nearest rational with denominator $j$. 
The previous lemma together with 
the usual compactness argument provides a sequence $Y\in P_{\bar t}$ 
with $d(X,Y) = 0$ 
and thus $\dim(Y) = s$.  We claim that by 
that by appropriately slow choice of $\bar \epsilon$, we can guarantee 
that $Y$ comes out weakly $s$-random.

The idea is that while $\epsilon$ is held fixed above 0, by changing 
$\epsilon$ fraction of each new chunk, we make $Y$ behave 
like a sequence of dimension strictly greater than $s$ for as long as 
we like.  This allows 
the production and maintenance of a buffer of extra complexity which 
is used to smooth out the bumps in our construction---the logarithmic 
factors from the use of the complexity of pairs formula and the 
possibility for a mid-chunk decrease in the complexity of $Y$.

Specifically, using the complexity of pairs formula repeatedly, and considering 
worst case mid-chunk complexities, we have
$$K(Y\rest{n_j+k}) \geq \sum_{i=0}^j t_i i^2 - O(j\log j)$$ where 
$k<j^2$.  So $Y$ will be $s$-random if we can arrange that eventually
$$\sum_{i=0}^j t_i i^2 - O(j \log j) > s(n_j+j^2),$$
which is what the next lemma guarantees.

\begin{lemma}\label{lem:buffer}
For any constant $c$ and any infinite sequence $\bar s \in D$, let 
$$s = \liminf_{j\rightarrow \infty} \frac{1}{n_j} \sum_{i=0}^j s_i i^2.$$ 
Then there is an infinite $\bar \epsilon \in E$ with 
$\epsilon_i\rightarrow 0$, and a constant $b$, such that for all $j$,
$$\sum_{i=0}^j M(s_i, \epsilon_i) i^2 - cj^2 > sn_j - b.$$
\end{lemma}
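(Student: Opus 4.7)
The plan is to construct $\bar\epsilon \in E$ adaptively from $\bar s$, exploiting the fact that $M(s,\epsilon) - s > 0$ whenever $s < 1$ and $\epsilon > 0$. The case $s = 1$ is trivial for the intended application (handled by \cref{thm:main}), so assume $s < 1$, and set $\eta := (1-s)/2$ and $\gamma := (1-s)/(1+s)$, both positive.

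First I would establish the per-chunk estimate: there is $c_0 > 0$, depending only on $s$, such that $M(s',\epsilon) - s' \geq c_0 \epsilon$ whenever $s' \leq 1 - \eta$ and $\epsilon$ lies below a fixed absolute threshold. This is a mean-value calculation using that $H'(p) = \log((1-p)/p)$ is bounded below on the compact set $[0, H^{-1}(1-\eta) + \text{const}]$. Next, define $L_j := \sum_{i \leq j,\, s_i \leq 1-\eta} i^2$ and $V_j := \max(0, sn_j - \sum_{i \leq j} s_i i^2)$. The bound $s_i > 1-\eta$ on the complementary indices gives $\sum s_i i^2 \geq (1-\eta)(n_j - L_j)$, which rearranges to
\[
L_j \;\geq\; \gamma n_j + \frac{2V_j}{1+s}.
\]
The hypothesis $\liminf (\sum s_i i^2)/n_j = s$ implies $V_j/n_j \to 0$.

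Setting $\tau_j := 2(cj^2 + V_j)/(c_0 \gamma n_j)$, I then build $\bar\epsilon \in E$ by following the nonincreasing envelope of the $\tau_j$, halving $\epsilon$ (at most once per step, as required by $E$) as that envelope drops; since $\tau_j \to 0$ only polynomially, this schedule produces $\epsilon_j \to 0$ while $\epsilon_j \geq \tau_j$ for every $j$ past a certain point. For those $j$ large enough that $\epsilon_j$ lies below the per-chunk threshold, and using $\epsilon_i \geq \epsilon_j$ for $i \leq j$, the per-chunk estimate yields
\[
\beta_j := \sum_{i \leq j}(M(s_i,\epsilon_i) - s_i)\,i^2 \;\geq\; c_0 \epsilon_j L_j \;\geq\; c_0 \tau_j \gamma n_j \;=\; 2(cj^2 + V_j) \;\geq\; cj^2 + V_j.
\]
Combined with $\sum s_i i^2 \geq sn_j - V_j$, this gives $\sum M(s_i,\epsilon_i)\,i^2 - cj^2 \geq sn_j$ for all sufficiently large $j$, and the finitely many small-$j$ exceptions are absorbed into the constant $b$.

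The main obstacle is the tension between requiring $\epsilon_j \to 0$ and needing the buffer $\beta_j$ to grow at least like $cj^2$. It is resolved by observing that when $s < 1$ the low-dimension weight $L_j$ grows like $n_j = \Theta(j^3)$, not merely like $j^2$, so the factor $n_j/j^2 \to \infty$ is exactly the slack that allows $\epsilon_j$ to vanish while $c_0 \epsilon_j L_j$ still dominates $cj^2$.
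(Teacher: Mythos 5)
Your overall strategy is recognisable but genuinely different from the paper's. The paper proves, for each fixed $\epsilon$, a \emph{global linear} lower bound $M(x,\epsilon)\ge d+(1-d)x$ valid for \emph{all} $x\in[0,1]$ (compactness gives $M(x,\epsilon)\ge x+d$ on $[0,H(1/2-\epsilon)]$, and the extension to all of $[0,1]$ is automatic since $d+(1-d)x\le 1$). Summing against the weights $i^2$ then gives $\sum M(s_i,\epsilon)i^2\ge dn_j+(1-d)\sum s_i i^2$, and the margin $d+(1-d)s-s=d(1-s)>0$ absorbs both the $\liminf$ fluctuation and the $cj^2$ term, with no need to keep track of which chunks have $s_i$ near $1$. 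You instead use a per-chunk gain $M(s',\epsilon)-s'\ge c_0\epsilon$ that only holds when $s'$ is bounded away from $1$, and then try to argue that the weight $L_j$ of the low-$s_i$ chunks is large. That is a workable idea, but it forces you to control $L_j$, which the paper's linear bound sidesteps entirely.

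There is a genuine gap in the $L_j$ step. You claim $L_j\ge\gamma n_j+\frac{2V_j}{1+s}$ ``rearranges'' from $\sum s_i i^2\ge(1-\eta)(n_j-L_j)$. But to rearrange in that direction you also need an \emph{upper} bound on $\sum s_i i^2$, and the only one available, $\sum s_i i^2\le sn_j-V_j$, holds \emph{only when $V_j>0$}. When $V_j=0$ (which is the generic case, since $\liminf=s$ only pins down $\sum s_i i^2$ from below infinitely often), $\sum s_i i^2$ can greatly exceed $sn_j$ and $L_j$ can be arbitrarily small. Concretely, if $s_i=1$ for all $i\le j$ for some large initial stretch of $j$'s, then $L_j=0$ and $\beta_j=0$, so your intermediate claim $\beta_j\ge cj^2+V_j$ fails for those $j$, and they need not be ``small $j$'' absorbable into $b$. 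Of course the lemma's conclusion \emph{does} hold for such $j$ --- but via a different mechanism: when $L_j$ is small the surplus $\sum s_i i^2-sn_j$ is itself $\Theta(n_j)\gg cj^2$, so no buffer is needed at all. Your proof does not notice this and instead routes everything through $\beta_j\ge cj^2+V_j$, which is false in general. A patch is to replace $V_j$ by $\max\bigl(0,\,sn_j+cj^2-\sum_{i\le j}s_i i^2\bigr)$, treat the case where this is zero separately (the conclusion is then immediate from $M\ge s_i$), and otherwise rederive the $L_j$ bound with a $-\frac{cj^2}{1+s}$ correction, which is negligible against $\gamma n_j\sim j^3$. But as written the argument does not go through, and the paper's global linear bound is both simpler and immune to this difficulty.
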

\begin{proof}
Observe that for any fixed $\epsilon$, there is a $d>0$ such that 
$$M(x,\epsilon) \geq d + (1-d)x.$$
(On the closed interval 
$[0, H(1/2 - \epsilon)]$, $M(x,\epsilon) > x$, so by compactness, 
on this interval $M(x,\epsilon) > x + d$ for some $d$.  Outside 
this interval, $M(x,\epsilon) = 1$.)  By this bound,
$$\sum_{i=1}^j M(s_i,\epsilon)i^2 \geq dn_j + (1-d)\sum_{i=0}^j s_ii^2.$$
Since $s<1$, there is a $\delta$ such that
$d + (1-d)(s-\delta) > s+\delta$.  So for large enough $N$, we have 
for each $j>N$,
$$\sum_{i=1}^j M(s_i,\epsilon)i^2 \geq dn_j + (1-d)(s-\delta)n_j > (s+\delta)n_j > sn_j + cj^2.$$
All this was done for fixed $\epsilon$, so rename this $N$ as $N_\epsilon$.  
The sequence we want is defined by letting $\epsilon_0=1$, and then let 
$\epsilon_j = \epsilon_{j-1} / 2$ if $j > N_{\epsilon_{j-1}/2}$, and 
otherwise $\epsilon_j = \epsilon_{j-1}$.  The constant $b$ is chosen to 
absorb any irregularity that occurs for $j\leq N_1$.
\end{proof}

Setting $c$ large enough, choosing $\bar \epsilon$ according to 
\cref{lem:buffer}, and constructing $Y$ using $\bar \epsilon$  with 
\cref{lem:main3} produces a $Y$ which is coarsely 
similar to $X$ and with $K(Y\rest{n}) > sn - b$.  This completes the proof 
of \cref{thm:dims_srandom}.

\section{Intermezzo: decreasing dimension}

In the next section, we will generalise \cref{thm:generalization} to increasing dimension from~$s$ to some $t<1$. Now we discuss decreasing dimension. As discussed in the introduction, in one case we can meet the following naive bound.

\begin{proposition}\label{prop:dim_bound}
For any $X,Y\in \Cantor$, 
\[
	|\dim(Y) - \dim(X)| \le H(d(X,Y)).
\]
\end{proposition}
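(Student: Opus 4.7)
The plan is a direct complexity-transfer argument. Write $\delta = d(X,Y)$. Without loss of generality assume $\delta \le 1/2$: otherwise replace $Y$ by its bitwise complement $\bar Y$, noting that $\dim(\bar Y) = \dim(Y)$ (up to $O(1)$ in prefix-free complexity) and $d(X, \bar Y) = 1 - \delta \le 1/2$, while $H(1-\delta) = H(\delta)$.

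Fix $\epsilon > 0$. By definition of the Besicovitch distance as a $\limsup$, there exists $N$ such that for every $n \ge N$ we have $d(X\rest{n}, Y\rest{n}) \le \delta + \epsilon$. Writing $k_n$ for the number of positions $i < n$ where $X(i) \ne Y(i)$, this means $k_n \le (\delta + \epsilon)n$. Given $X\rest{n}$ together with $n$ and $k_n$, the string $Y\rest{n}$ is determined by specifying which $k_n$ of the $n$ positions are flipped, so
\[
K(Y\rest{n} \mid X\rest{n}) \le \log \binom{n}{k_n} + O(\log n) \le \log V(n, \delta + \epsilon) + O(\log n) \le H(\delta + \epsilon)\, n + O(\log n),
\]
using the standard entropy bound on Hamming ball volumes quoted in the introduction (here $\delta + \epsilon \le 1/2$ for small enough $\epsilon$, and the $O(\log n)$ absorbs a description of $n$ and $k_n$). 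Applying the symmetry-of-information inequality for prefix-free complexity,
\[
K(Y\rest{n}) \le K(X\rest{n}) + K(Y\rest{n} \mid X\rest{n}) + O(\log n) \le K(X\rest{n}) + H(\delta + \epsilon)\, n + O(\log n).
\]

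Dividing by $n$, taking $\liminf_n$ on both sides, and noting that $H(\delta+\epsilon)$ is a constant in $n$ while the error is $o(1)$, we obtain $\dim(Y) \le \dim(X) + H(\delta + \epsilon)$. Letting $\epsilon \to 0$ and using continuity of $H$ at $\delta$ gives $\dim(Y) \le \dim(X) + H(\delta)$. The reverse inequality $\dim(X) \le \dim(Y) + H(\delta)$ is identical by swapping the roles of $X$ and $Y$, which yields $|\dim(Y) - \dim(X)| \le H(d(X,Y))$.

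There is no real obstacle here; the only mild subtleties are the reduction to $\delta \le 1/2$ so that the entropy bound on Hamming balls applies directly, and the fact that using a uniform $\epsilon$-cushion turns the $\limsup$ in the definition of $d(X,Y)$ into a bound that holds for all sufficiently large $n$, which is exactly what is needed to transfer it through the $\liminf$ defining $\dim$.
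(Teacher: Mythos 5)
Your core argument---encode $Y\rest{n}$ from $X\rest{n}$ by specifying the set of flipped positions, bound the size of that description by the volume of the relevant Hamming ball, apply subadditivity of prefix-free complexity, pass to the $\liminf$, and then let $\epsilon\to 0$---is exactly the paper's proof, and it is correct in the regime $d(X,Y)\le 1/2$, which is the only regime in which the proposition is used later.

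The opening ``without loss of generality'' step, however, does not work. You assert $d(X,\bar Y)=1-\delta$, but the Besicovitch distance is a $\limsup$ of normalized Hamming distances, and $\limsup_n\bigl(1-d(X\rest{n},Y\rest{n})\bigr)=1-\liminf_n d(X\rest{n},Y\rest{n})$, not $1-\limsup_n d(X\rest{n},Y\rest{n})$. So from $d(X,Y)>1/2$ you cannot conclude $d(X,\bar Y)\le 1/2$; you only get $d(X,\bar Y)=1-\liminf_n d(X\rest{n},Y\rest{n})$, which can be anywhere in $[\,1-d(X,Y),\,1\,]$. In fact no reduction can save the full statement, because the proposition as literally written fails when $d(X,Y)>1/2$: take $Y$ to be ML-random and build $X$ on super-exponentially growing blocks, alternating a block of zeros with a block copying $\bar Y$. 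Then $\dim(X)=0$ (witnessed at the ends of the zero-blocks), $\dim(Y)=1$, yet $d(X,Y)=1$ (the density of disagreements tends to $1$ at the ends of the $\bar Y$-blocks), giving $H(d(X,Y))=0<1=|\dim(Y)-\dim(X)|$. The paper's own proof carries the same tacit restriction $d(X,Y)\le 1/2$, since it bounds the number of possible symmetric differences by $2^{nH(\delta+\epsilon)}$, which is the Hamming-ball estimate valid only for radius at most $1/2$. The honest fix is simply to state and prove the inequality for $d(X,Y)\le 1/2$ (and note that it is trivial at $d(X,Y)=1/2$), rather than to attempt a complementation trick.
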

\begin{proof}  Let $s = \dim(X)$ and $\delta = H(d(X,Y))$.
For infinitely many $n$ we have 
$K(Y\rest{n}) \leq (s+\epsilon)n + H(\delta+\epsilon)n + O(\log n),$
where $\epsilon$ is arbitrary.  The first term comes from the inequality 
$K(X\rest{n}) \leq (s+\epsilon)n$ which holds infinitely often. 
The second term comes from a description of the symmetric difference 
$X\rest{n} \Delta Y\rest{n}$, using the fact that eventually 
$d(X\rest{n},Y\rest{n}) < \delta+\epsilon$ to bound the number of possible 
symmetric differences to $2^{nH(\delta+\epsilon)}$.
\end{proof}

  Therefore, if $\dim(Y)=1$ and $\dim(X) = s$, then
$d(X,Y)\geq H^{-1}(1-s)$.  We show that in this case, the bound is tight. This will be based on the finite case:

\begin{lemma}\label{lem:vv}
  For any string $y$ of length $n$ and a given radius $r$, there is a string $x$ in $B(y,r)$
   with $\dim(x) \le 1-H(r) + O(\log n/n)$.
\end{lemma}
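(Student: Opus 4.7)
The plan is to exhibit a \emph{covering code}: a set $C \subseteq \bits^n$, computable from $n$ and $k = \lfloor rn \rfloor$, such that every $y \in \bits^n$ is within Hamming distance $rn$ of some element of $C$, and moreover $|C| \leq 2^{(1-H(r))n + O(\log n)}$. Granting such a $C$, the lemma is immediate: given $y$, pick any $x \in C \cap B(y,r)$; to specify $x$ it suffices to give $n$, $k$, and the index of $x$ in the canonical enumeration of $C$, using $(1-H(r))n + O(\log n)$ bits in total. Dividing by $n$ gives the claimed bound on $\dim(x)$.

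For the existence of $C$ I would use the standard probabilistic covering argument. Draw $N$ strings independently and uniformly from $\bits^n$; the probability that a fixed $y$ is not covered is at most $(1 - V(n,r)/2^n)^N \leq \exp(-N V(n,r)/2^n)$. Choosing $N = \lceil (n+1)\ln 2 \cdot 2^n / V(n,r) \rceil$ and union-bounding over $y$ produces, with positive probability, a covering of size $N$. Using the lower bound $V(n,r) \geq 2^{H(r)n - O(\log n)}$ recorded in the introduction, this gives $N \leq 2^{(1-H(r))n + O(\log n)}$. To make the code effective, search in the canonical order through finite subsets of $\bits^n$ of size $\leq N$ for the first one that covers $\bits^n$ within radius $k$; existence guarantees the search terminates, so $C$ is a total computable function of $(n,k)$.

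I do not expect a real obstacle: the construction is entirely standard, and the only thing to verify carefully is the accounting of the logarithmic terms. The $O(\log n)$ bits needed to name $n$ and $k$, together with the $O(\log n)$ gap between $\log V(n,r)$ and $H(r)n$, fit comfortably inside the stated $O(\log n / n)$ slack on $\dim(x)$. A more explicit (non-probabilistic) alternative would be to cite a known construction of asymptotically optimal covering codes, but the probabilistic argument is cleanest and keeps the lemma self-contained.
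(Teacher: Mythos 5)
Your proof is correct and follows the second of the two routes the paper sketches for this lemma: cover $\bits^n$ by Hamming balls of radius $rn$ using roughly $2^{(1-H(r))n}$ centres, and observe that each centre can be named by $n$, $k$, and its index in a canonical computable covering, giving $K(x) \le (1-H(r))n + O(\log n)$. The only deviation is that you reprove the covering bound with the standard probabilistic deletion argument, whereas the paper simply cites Delsarte--Piret ($\kappa(n,r) < 1 + n2^n\ln 2 / V(n,r)$); your bound $N \approx (n+1)\ln 2 \cdot 2^n/V(n,r)$ is the same up to constants, which is amply absorbed by the $O(\log n/n)$ slack. Your remark about making the code effective by brute-force search over subsets of size $\le N$ is exactly the implicit content of the paper's phrase ``finding a witness for $\kappa(n,r)$,'' and your accounting of the prefix-free overhead is sound. (The paper also offers an entirely different first route via Vereshchagin--Vit\'anyi structure-function theory, which you did not pursue; it is more machinery but generalizes beyond Hamming balls.)
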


\Cref{lem:vv} can be proved using tools from the Vereshchagin-Vit\'anyi theory \cite{VV:10},
  which is surveyed in \cite{VereshchaginShen}.  This is essentially the first part of Theorem 8 of \cite{VereshchaginShen}, modified using the
following facts.  First, the class of Hamming balls satisfies the conditions of Theorem 8 (see \cite[Proposition 28]{VereshchaginShen}).  Second, if $V(n,r)$ is the size of a Hamming ball 
of radius $r$ contained in $2^n$, then $\log(V(n,r)) = H(r)n \pm O(\log n)$ 
(see \cite[Remark 11]{VereshchaginShen}).  
Finally, the complexity of a Hamming ball is within
 $O(\log n)$ of the complexity of its center. 

 Another way to obtain \cref{lem:vv} is by considering covering Hamming space by balls. For any $n$ and $r\in (0,1/2)$, let $\kappa(n,r)$ be the smallest cardinality of a set $C\subseteq \bits^n$ such that $\bits^n = \bigcup_{x\in C} B(x,r)$. Delsarte and Piret (\cite{DelsartePiret:1986}, see \cite[Thm.12.1.2]{CoveringCodes}) showed that 
 \[
 	\kappa(n,r) < 1+n 2^n \ln 2 / V(n,r),
 \]
  where recall from the introduction that $V(n,r)$ is the size of the Hamming space of radius~$r$. As mentioned above, $\log V(n,r)\ge H(r)n - O(\log n)$ (for fixed~$r$), whence
  \[
  	{\log (\kappa(n,r))} \le (1-H(r))n + O(\log n);
  \]
\cref{lem:vv} follows by finding a witness for $\kappa(n,r)$ and giving appropriately short descriptions to all the strings in that witness. 

  Now the counterpart to \cref{prop:dim_bound} follows.

\begin{theorem}\label{thm:random_dims}
For any $Y\in \Cantor$ there is some $X \in \Cantor$ such that $\dim(X)\leq s$ and $d(X,Y)\leq H^{-1}(1-s)$. 
\end{theorem}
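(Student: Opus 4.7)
The plan is to apply \cref{lem:vv} chunk-by-chunk to $Y$, using the quadratic chunking scheme introduced in \cref{sec:dims_random}, and to concatenate the resulting low-complexity approximants into a single sequence $X$. Set the radius $r = H^{-1}(1-s)$, so that $H(r) = 1-s$. For each $i \ge 1$ let $y_i = Y\rest{[n_i, n_{i+1})}$ be the $i$th chunk, of length $i^2$. I will invoke \cref{lem:vv} to pick some $x_i \in B(y_i, r)$ with $K(x_i) \le (1-H(r)) i^2 + O(\log i)$, and then define $X$ to be the infinite concatenation $x_1 x_2 x_3 \cdots$.

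The Besicovitch distance bound is then immediate from \cref{lem:distance2}: since each per-chunk normalised Hamming distance $d(x_i, y_i)$ is at most $r$,
\[
d(X, Y) \le \limsup_{j \to \infty} \frac{1}{n_j} \sum_{i=1}^{j-1} r \cdot i^2 = r = H^{-1}(1-s).
\]

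For the dimension bound I would iterate the prefix-free concatenation inequality $K(\sigma\tau) \le K(\sigma) + K(\tau) + O(1)$ across the first $j$ chunks, yielding
\[
K(X\rest{n_j}) \le \sum_{i=1}^{j-1} K(x_i) + O(j) \le (1-H(r)) n_j + O(j \log j).
\]
Since $n_j \sim j^3/3$, the cumulative error $O(j \log j)$ is $o(n_j)$, so dividing by $n_j$ and taking $\liminf$ gives $\dim(X) \le 1-H(r) = s$, as required.

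There is no serious obstacle here; the argument just assembles existing pieces. The only step requiring a moment's thought is verifying that the quadratic chunk sizing is simultaneously (a) large enough that the per-chunk $O(\log i)$ errors from \cref{lem:vv} plus the $O(j)$ concatenation overhead sum to $o(n_j)$, and (b) small enough that the chunk boundaries $n_j$ dominate the limsup computation in \cref{lem:distance2}. Both conditions are already arranged by the scheme of \cref{sec:dims_random}, so no new calibration is needed.
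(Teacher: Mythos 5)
Your proposal is correct and follows essentially the same route as the paper: both split $Y$ into the quadratic chunks, apply \cref{lem:vv} on each chunk to get a nearby low-complexity string, concatenate, and invoke \cref{lem:distance2} for the distance bound. The only cosmetic difference is that you derive $\dim(X)\le s$ directly by iterating prefix-free subadditivity of $K$, whereas the paper cites \cref{lem:compression2} to reach the same conclusion.
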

Note that if $Y$ is random, then it must be the case that $\dim(X) = s$.

{\begin{proof}
Let $\delta = H^{-1}(1-s)$. 
Given any $Y$, we build $X$ by initial segments.  Split $Y$ into chunks by cutting it at 
the locations $n_j$ 
as before.  By \cref{lem:vv}, for each chunk $y$ from $Y$, 
find a chunk $x$ in $B(y,\delta)$ with 
$\dim(x) \leq s + O(\log n/n)$, where $n = |y|$, and append it to $X$.  
Then $d(X,Y) \le \delta$, and $\dim(X) \le s$, because each chunk satisfies these 
(with $O(\frac{\log n}{n})$ error in the latter case), 
and \cref{lem:distance2} and \cref{lem:compression2} apply.  
\end{proof}}

On the other hand, \cref{prop:dim_bound} is not always optimal. This can be demonstrated with a simple error correcting code.

\begin{proposition} \label{prop:naive_bound_not_optimal}
There is a sequence $Y\in \Cantor$ of dimension $1/2$ such that $\dim(X)>0$ for all $X$ with $d(X,Y)\le H^{-1}(1/2)$. 
\end{proposition}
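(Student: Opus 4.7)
The plan is to construct $Y$ as the image of a Martin-L\"of random sequence $R = r_1 r_2 r_3 \cdots$ under the simplest possible rate-$1/2$ error-correcting code, namely bit duplication: set $Y := r_1 r_1 r_2 r_2 r_3 r_3 \cdots$. Since $Y\rest{2n}$ and $R\rest{n}$ are each computable from the other, $K(Y\rest{2n}) = K(R\rest{n}) + O(1) = n \pm O(\log n)$, and handling odd lengths is routine, so $\dim(Y) = 1/2$ as required.

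Suppose then that $X \in \Cantor$ satisfies $d(X, Y) \le \delta := H^{-1}(1/2)$. The key move will be to extract from $X$ the odd-indexed subsequence $\hat Y \in \Cantor$ defined by $\hat Y_i := X_{2i-1}$, and compare it to $R$. Since $Y_{2i-1} = r_i$, every index where $\hat Y\rest n$ and $R\rest n$ disagree corresponds to an error at an odd position of $X\rest{2n}$ against $Y\rest{2n}$, so $d(\hat Y\rest n, R\rest n) \le 2\, d(X\rest{2n}, Y\rest{2n})$. Passing to $\limsup$ gives $d(\hat Y, R) \le 2\delta$, and since $\dim(R) = 1$, \cref{prop:dim_bound} yields $\dim(\hat Y) \ge 1 - H(2\delta)$.

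The crux of the argument is the strict inequality $2\delta < 1/2$, equivalently $H(1/4) > 1/2$, which one verifies by direct numerical evaluation ($H(1/4) \approx 0.811$). This ensures $H(2\delta) < 1$, so $\dim(\hat Y)$ is strictly positive. Finally, since $\hat Y\rest n$ is computable from $X\rest{2n}$ with $O(\log n)$ overhead, we have $K(X\rest{2n}) \ge K(\hat Y\rest n) - O(\log n) \ge (1 - H(2\delta))n - o(n)$, and a routine $\liminf$ manipulation across both parities of length yields $\dim(X) \ge \dim(\hat Y)/2 \ge (1 - H(2\delta))/2 > 0$, as claimed.

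I expect the main obstacle to be quantitative rather than conceptual: the whole argument pivots on the strict inequality $H^{-1}(1/2) < 1/4$, which ensures that doubling the allowed noise density---inevitable because odd-position errors of $X\rest{2n}$ are concentrated in only half the projected sequence---still keeps us safely below the uniform flipping threshold $1/2$ at which all information about $R$ would be destroyed. The duplication code's one bit of redundancy per data bit is precisely what creates this slack, and the proof is really just an expression of that slack in terms of \cref{prop:dim_bound}.
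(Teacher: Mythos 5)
Your proof is correct, but it takes a genuinely different route from the paper's, even though both use the same sequence $Y$ (a random $R$ joined with itself). The paper bounds $K(Y\rest{n})$ \emph{from above} in terms of $K(X\rest{n})$: first describe $X\rest{n}$, then for each of the at most $H^{-1}(1/2)n$ pairs $(X(2i), X(2i+1))$ that disagree give the single bit $Y(2i)$, and finally, for the pairs that agree, describe the set of indices where majority decoding fails --- a subset of $[n/2]$ of size at most $H^{-1}(1/2)n/2$, hence describable in $H(H^{-1}(1/2))\cdot n/2 = n/4$ bits. Combining with $K(Y\rest n)\ge n/2$ gives $\dim(X)\ge 1/2 - H^{-1}(1/2) - 1/4 \approx 0.14$. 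You instead project $X$ onto odd coordinates, observe that the projection is within Besicovitch distance $2\delta$ of $R$, and invoke \cref{prop:dim_bound} to conclude $\dim(\hat Y)\ge 1-H(2\delta)$, then transfer back via $\dim(X)\ge\dim(\hat Y)/2 \approx 0.12$. Your argument is a clean reduction to an already-established general bound, at the cost of a slightly weaker numerical constant: you pessimistically treat all of $X$'s errors as concentrated at odd positions, whereas the paper's decoder exploits both halves of each duplicated pair. Since the proposition only asks for strict positivity, either constant suffices, and the key inequality in both cases is that $H^{-1}(1/2)$ sits comfortably below $1/4$.
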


\begin{proof}
	if $Y$ is the join of a random with itself 
$(t=1/2)$, and if $s=0$, 
suppose $X$ is such that $d(X,Y) \leq H^{-1}(1/2)$.  Then 
$Y\rest{n}$ can be given a description of length
$$K(X\rest{n}) + H^{-1}(1/2)n + n/4 + o(n).$$
Here the description first provides $X\rest{n}$.  Then for each $i$ such that 
$X(2i)\neq X(2i+1)$ (there are at most $H^{-1}(1/2)n$ such $i$), it 
gives $Y(2i)$.  Then it gives a description of 
$\{i : X(2i)=X(2i+1) \neq Y(2i)\}$, a subset of $n/2$ which has size at most 
$\frac{H^{-1}(1/2)}{2}n$, and therefore a description of length $H(H^{-1}(1/2))\frac{n}{2}$.  Since $H^{-1}(1/2) + 1/4 < 1/2$, and for all $n$ we have 
$K(Y\rest{n}) \geq n/2$, we have 
$\dim(X) > 0$.
\end{proof}

In a weaker sense, however, the bound from \cref{prop:dim_bound} is always optimal.

\begin{proposition} \label{prop:dim_bound_achieved}
	For all $s<t$ there are $X,Y\in \Cantor$ with $\dim(X) = s$, $\dim(Y)=t$ and $d(X,Y) = H^{-1}(t-s)$.
\end{proposition}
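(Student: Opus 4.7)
The plan is to build $X$ and $Y$ via a block construction in the style of \cref{sec:dims_random}. I would first construct $Y$ with $\dim(Y) = t$, then obtain $X$ nearby with low complexity via a generalization of \cref{lem:vv}; the matching lower bounds for $\dim(X)$ and $d(X,Y)$ will come for free from \cref{prop:dim_bound}. Fix $\delta = H^{-1}(t - s)$ and take block sizes $n_j = \sum_{i < j} i^2$.

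First construct $Y$: use fresh bits of a Martin-L\"of random sequence in each block, interleaved with zeros uniformly within the block (exactly as in \cref{sec:dims_random}), so as to obtain $K(Y \rest N) = tN + o(N)$ for every $N$ and hence $\dim(Y) = t$.

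The key ingredient is the following generalization of \cref{lem:vv}: \emph{for every $y \in \bits^n$ and every $r \in (0, 1/2)$, there exists $x \in B(y, r)$ with $K(x) \le K(y) - H(r) n + O(\log n)$}. This comes from the same Vereshchagin--Vit\'anyi machinery (Theorem~8 of \cite{VereshchaginShen} applied to the class of Hamming balls of radius $r$) that proves \cref{lem:vv}, but tracking the dependence on $K(y)$ rather than replacing it by the trivial bound $K(y) \le n + O(\log n)$; in structure-function language, this is the statement that $y$ has an optimal Hamming-ball model whose center-complexity and log-cardinality sum to $K(y) + O(\log n)$. Applying this with $r = \delta$ to each block $y_j$ of $Y$, for which $K(y_j) \le t \, |y_j| + O(\log |y_j|)$, yields $x_j \in B(y_j, \delta)$ with $K(x_j) \le s \, |y_j| + O(\log |y_j|)$. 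Concatenating the $x_j$ produces $X$ with $d(X, Y) \le \delta$ by \cref{lem:distance2} and $\dim(X) \le s$ by \cref{lem:compression2}. The matching lower bounds are then immediate from \cref{prop:dim_bound}: substituting $\dim(X) \le s$ and $d(X, Y) \le \delta$ into $|\dim(Y) - \dim(X)| \le H(d(X, Y))$ gives the chain
\[
t - s \le t - \dim(X) \le H(d(X, Y)) \le H(\delta) = t - s,
\]
so every inequality is tight, forcing $\dim(X) = s$ and, by strict monotonicity of $H$ on $[0, 1/2]$, $d(X, Y) = \delta$.

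The main obstacle will be to make the above generalization of \cref{lem:vv} precise, checking that the $O(\log n)$ error term does not hide any dependence on $K(y)$; once that is in hand, the argument reduces to a routine transcription of the block-construction techniques of \cref{sec:dims_random}. A minor additional point---already handled exactly as in \cref{sec:dims_random} by the slow quadratic growth of $n_j$ and uniform distribution of complexity within blocks---is ensuring that the $\liminf$ and $\limsup$ definitions of $\dim$ and $d$ give the expected values at \emph{all} $N$ and not only at block boundaries.
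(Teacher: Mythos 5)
There is a genuine gap: the ``generalization of \cref{lem:vv}'' that you rely on --- that for \emph{every} $y\in\bits^n$ and $r\in(0,1/2)$ there is an $x\in B(y,r)$ with $K(x)\le K(y)-H(r)n+O(\log n)$ --- is false. Consider $y=y_0 0^m$ where $n=2m$ and $y_0\in\bits^m$ is incompressible, so $K(y)=m+O(\log n)=n/2+O(\log n)$. Any $x\in B(y,r)$ differs from $y$ in at most $rn$ positions, hence at most $rn$ of the positions in the first half; writing $x=x_0x_1$ with $|x_0|=m$, this gives $d(x_0,y_0)\le rn/m=2r$. Since $y_0$ is incompressible, $m-O(\log m)\le K(y_0)\le K(x_0)+H(2r)m+O(\log m)$, so $K(x)\ge K(x_0)-O(\log n)\ge(1-H(2r))m-O(\log n)$. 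Your claimed bound would require $(1-H(2r))/2\le 1/2-H(r)$, i.e.\ $H(r)\le H(2r)/2$, which by strict concavity of $H$ (with $H(0)=0$) fails for all $r\in(0,1/4)$; the gap is a linear-in-$n$ term. Concretely, for $t=1/2$, $s=0$, $r=H^{-1}(1/2)$ your lemma asserts $K(x)=O(\log n)$, but every $x\in B(y,r)$ has $K(x)\ge\bigl(1-H(2H^{-1}(1/2))\bigr)\tfrac{n}{2}-O(\log n)$, which is linear in $n$. This is exactly the ``redundancy'' obstruction already isolated in \cref{prop:naive_bound_not_optimal}: a string of dimension $1/2$ built from duplicated (or zero-padded) random bits cannot have its complexity driven down to $0$ by a small perturbation.

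Consequently, your plan of fixing $Y$ first (by interleaving random bits with zeros --- precisely the redundant kind of string that defeats the lemma) and then finding $X$ block-by-block cannot work. The paper avoids this by constructing $X$ and $Y$ \emph{simultaneously} via \cref{lem:t_s_finite_case}: the covering-code argument produces a set $D$ of low-complexity centers together with the set $S(D)=\bigcup_{x\in D}B(x,r)$ of size at least $2^{nt}/n^{O(1)}$, and one then \emph{chooses} the block $y_j$ from inside $S(D)$ so that it simultaneously has high conditional complexity and, by construction, an $r$-close low-complexity partner $x_j$. In other words, only \emph{some} strings of complexity $\approx tn$ have a nearby string of complexity $\approx sn$, and the paper picks its $Y$ from among those; your proposal needs the property to hold for \emph{all} such strings, which it does not. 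The remainder of your argument (the block bookkeeping via \cref{lem:distance2} and \cref{lem:compression2}, and closing the chain of inequalities through \cref{prop:dim_bound}) matches the paper and is fine; the fix is to replace your key lemma with \cref{lem:t_s_finite_case} and defer the choice of $y_j$ until after $D$ is known.
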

\begin{proof}
This is similar to the proof of \cref{thm:random_dims}, once we obtain the analogous finite case. Let $r = H^{-1}(t-s)$. Find a witness~$C$ for $\kappa(n,r)$; then find some $D\subseteq C$ of log-size $sn  + O(\log n)$ maximising the size of $S(D) = \bigcup_{x\in D}B(x,r)$.  Because we can guarantee to cover at least $\frac{|D|}{|C|}$ of the space with $S(D)$, we have $\log |S(D)| \ge nt - O(\log n)$. This gives:

\begin{lemma} \label{lem:t_s_finite_case}
	There are at least $2^{nt}n^{O(1)}$ many strings $y\in \bits^n$ which are $H^{-1}(t-s)$-close to a string of dimension at most $s+ O(\log n/n)$. %Therefore, there are $x,y\in \bits^n$ such that $\dim (x)\le s+ O(\log n/n)$, $\dim (x)\ge t - O(\log n/ n)$, and $d(x,y)\le H^{-1}(t-s)$. 
\end{lemma}

Sequences $X$ and~$Y$ as promised by \cref{prop:dim_bound_achieved} are constructed by chunks of size $i^2$, as above. Having defined $X\rest{n_j}$ and $Y\rest{n_j}$, we choose $x_j =  X\rest{[n_j,n_{j+1})}$ and $y_j =  Y\rest{[n_j,n_{j+1})}$ so that $\dim (x_j) \le s+ O(\log n/n)$, $\dim(y_j\mid Y\rest{n_j})\ge t - O(\log n/n)$ (where $n = j^2$), and $d(x_j,y_j)\le r$. 
\end{proof}

\section{Dimension $s$ sequences and dimension $t$ sequences}\label{sec:dims_dimt}

Finally we ask what density of changes are needed to turn a dimension $s$ sequence into 
a dimension $t$ sequence (where $t>s$). By the results of this paper, it is equivalent to
ask for the density of changes needed to turn a weakly 
$s$-random into a weakly $t$-random.  

\Cref{thm:generalization} can 
be generalized. Recall that Bernoulli randoms show that the following bound is optimal.

\begin{theorem}\label{thm:dims_dimt}
For every sequence $X$ with $\dim(X) = s$, and every $t>s$, 
there is a $Y$ with $\dim(Y)=t$ and $d(X,Y) \leq H^{-1}(t) - H^{-1}(s)$.
\end{theorem}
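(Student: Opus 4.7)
The plan is to mirror the chunk-based construction used in the proofs of \cref{thm:generalization,thm:dims_srandom}: split $X$ into chunks of length $j^2$, let $s_j = \dim(X\rest{[n_j,n_{j+1})} \mid X\rest{n_j})$, fix $\varepsilon_j \to 0$ slowly enough for \cref{lem:main3}, and build $Y$ as a compactness limit of a tree $\PP_{\bar t}$ of candidate $Y$-initial-segments. The parameters to select are the per-chunk targets $t_j$ for $Y$'s conditional dimensions and the densities of change $\delta_j$; on chunks where we are raising dimension, these are linked by $\delta_j = H^{-1}(t_j) - H^{-1}(s_j) + 2\varepsilon_j$.

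The naive prescription ``$t_j = t$ if $s_j \le t$, otherwise $t_j = s_j$'' gives $\dim(Y) \ge t$ but can overshoot the distance bound: for an $X$ whose chunks have $s_j \in \{0, 1\}$ in asymptotic weights $(1-s, s)$, the naive cost is $(1-s) H^{-1}(t)$, which in general strictly exceeds $H^{-1}(t) - H^{-1}(s)$. So the targets must be pulled \emph{below} $t$ on low chunks to compensate. I would set $t_j = \max(s_j, t^*)$ for a constant $t^*\le t$ chosen by the global balance equation
\[
 t^* w_A + \sum_{j\in B} s_j w_j = t,
\]
with $A=\{j: s_j < t^*\}$, $B$ its complement, and $w_j = j^2/n_k$ in the appropriate limit; this forces $\sum_j t_j w_j = t$ and hence $\dim(Y) = t$. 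The distance bound then reduces to a short convexity calculation: convexity of $H^{-1}$ (since $H$ is concave) together with Jensen's inequality yields $\sum_{j \in A} H^{-1}(s_j) w_j \ge H^{-1}(s_{A,\mathrm{avg}}) w_A$, and the mean value theorem applied to the map $v \mapsto H^{-1}\!\bigl(w_A H(v)+\sum_{j\in B} s_j w_j\bigr)$ gives $(H^{-1}(t^*)-H^{-1}(s_{A,\mathrm{avg}})) w_A \le H^{-1}(t) - H^{-1}(s_{\mathrm{avg}})$; combining these with $\dim(X)=s$ produces the required bound $d(X,Y) \le H^{-1}(t) - H^{-1}(s)$.

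The main obstacle is that the constant $t^*$ is a global asymptotic quantity depending on the full sequence $(s_j)$, so it cannot be committed to chunk by chunk. The remedy is the same compactness-over-initial-segments device used in \cref{thm:dims_srandom}: keep a tree of compatible $Y$-initial-segments parametrised by approximate choices of $t^*$, check that the tree is nonempty at every level (applying \cref{lem:main3} to chunks in $A$ where we are raising dimension, and simply copying on chunks in $B$, where $s_j \ge t^*$ contributes distance $0$ and conditional dimension at least the target), and extract $Y$ as an infinite path. A subsidiary point is that $\dim(Y) = t$ (rather than merely $\ge t$) requires matching the conditional complexity of each $Y$-chunk to its target $t_j$ from above as well as below; this follows from a counting argument, since the set of admissible last-chunk extensions produced in the proof of \cref{lem:main3} has size close to $2^{t_j\cdot j^2}$, so an extension with conditional complexity at most $t_j j^2 + O(\log j)$ is always available.
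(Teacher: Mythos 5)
Your proposed target $t_j = \max(s_j, t^*)$ with a single global constant $t^*$ does not work, and the gap is not a technicality: no choice of constant $t^*$ can handle all $X$. The trouble is that $\dim(X) = s$ is only a $\liminf$ statement about $\hat s_j = \tfrac{1}{n_j}\sum_{i<j}s_i i^2$, and the empirical distribution of the chunk dimensions $(s_i)_{i<j}$ can look completely different along different subsequences of $j$ that all realize the liminf. Concretely, take $X$ built from alternating, rapidly growing stretches: on one family of stretches make $s_i \equiv s$, and on another make $s_i \in \{0,1\}$ with $i^2$-weighted frequencies $(1-s, s)$, so that $\hat s_j \to s$ along the ends of both families. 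The constant-$s$ stretches force your balance equation to give $t^* = t$ (otherwise $\dim(Y) = t^* < t$ along those $j$). But with $t^* = t$ fixed, the $\{0,1\}$-mix stretches incur cost $(1-s)\bigl(H^{-1}(t)-H^{-1}(0)\bigr) = (1-s)H^{-1}(t)$, and this exceeds the budget $H^{-1}(t)-H^{-1}(s)$ whenever $H^{-1}(s) > sH^{-1}(t)$ — for example $s=0.49$, $t=0.51$ gives roughly $0.058 > 0.006$. Your remark that one could ``parametrise by approximate choices of $t^*$'' does not rescue this: the problem is not that $t^*$ is hard to compute in advance, but that no single value is correct even a posteriori. (Also, your motivating inequality ``$(1-s)H^{-1}(t) > H^{-1}(t)-H^{-1}(s)$ in general'' is overstated — it fails for e.g. $s=1/2$, $t=1$ — though it does hold for $s,t$ near $1/2$, which is enough to show the naive rule is bad.)

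The paper avoids this by taking $t_i$ to be a fixed, monotone, \emph{curved or linear} function of $s_i$ rather than a clamp, and it needs two different such functions in two cases split by whether $(1-s)g'(s) \le (1-t)g'(t)$ (with $g = H^{-1}$). In Case~1 it takes $t_i = M(s_i,\delta)$ with $\delta = g(t)-g(s)$ fixed (spend the full budget on every chunk, raise dimension as far as that budget allows), and proves $\dim(Y)\ge t$ by comparing $r(x)=M(x,\delta)$ with its tangent line at $s$, which requires \cref{lem:convex} (that $r$ is convex-then-concave) and the case hypothesis to make $\ell(1)\le 1$. In Case~2 it takes $t_i = \ell(s_i)$ where $\ell$ is the chord from $(s,t)$ to $(1,1)$ (so high-complexity chunks of $X$ are matched by high-complexity chunks of $Y$, unlike in your clamp), and proves $d(X,Y)\le\delta$ via \cref{lem:concave}, the concavity of $p(x) = g(\ell(x))-g(x)$. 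Those two convexity facts are the real analytic content of the theorem and take some work; the Jensen/mean-value sketch you offer does not engage with them, and in particular the convexity of $H^{-1}$ alone is not the relevant property. You will need either to reproduce the paper's two-case analysis, or to find a single per-chunk rule $s_i \mapsto t_i$ whose induced dimension map is convex and whose induced cost map is concave simultaneously — which appears to be exactly what forces the case split.

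Your subsidiary observation about getting $\dim(Y) = t$ rather than $\ge t$ via a counting argument is fine and matches the standard fix.
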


In analogy with the proof of \cref{thm:generalization}, in which the relative complexity of each 
chunk of $X$ was raised to $1$ via whatever distance was necessary, one might first consider 
raising the complexity of each chunk up to $t$.  This fails because of a failure of 
concavity.  Given an individual chunk whose relative complexity $s_i$ is less than $t$,
the density of changes needed to bring it up to $t$ is $\delta_i = H^{-1}(t) - H^{-1}(s_i)$. 
But when $s_i > t$, no changes are needed, so we should choose $\delta_i = 0$.  
However, the resulting function (mapping $s_i$ to $\delta_i$) is not concave.  So a tricky
$X$ could cause this strategy to use distance greater than $H^{-1}(t) - H^{-1}(s)$.  

We use one of two different strategies, with the chosen strategy depending on the particular
$s<t$ pair.  The first strategy is simple: raise the complexity of each chunk as much as 
possible while staying within distance $\delta = H^{-1}(t)-H^{-1}(s)$ of the given chunk.  
This strategy clearly produces a $Y$ with $d(X,Y) \leq \delta$, but showing that 
$\dim(Y)\geq t$ takes a little work and requires the assumption that 
$(1-s)g'(s) \leq (1-t)g'(t)$, where $g(t) = H^{-1}(t)$.

The second strategy is informed by the following reasoning.  If for some $j$, we 
have $\dim(X\rest{n_j}) \approx s$, 
then we should hope to have arranged that $\dim(Y\rest{n_j}) \geq t$, since if we do so,
then we have made the effective dimension of $Y$ large enough.  If we 
have achieved this for $Y\rest{n_j}$ 
and then $X$'s next chunk is relatively random, we can make $Y$'s next 
chunk relatively random for free, so we may as well do so.  This has the effect that 
$(\dim(X\rest{n_{j+1}}), \dim(Y\rest{n_{j+1}}))$ lies on or above 
the line connecting $(s,t)$ to $(1,1)$. 
In this case, our strategy is to use whatever
density of changes are necessary to keep $(\dim(X\rest{n_{j+1}}), \dim(Y\rest{n_{j+1}}))$
on or above this line.  Then it is clear that $\dim(Y) \geq t$, but showing
$d(X,Y) \leq \delta$ requires a little work and the assumption that 
$(1-t)g'(t) \leq (1-s)g'(s)$, complementary to the assumption under which the first strategy
works.

\begin{proof}[Proof of \cref{thm:dims_dimt}]
Given $X$, let $s_i$ be defined as in (\ref{s_j}), and define $\hat s_j$ by 
$$\hat s_j = \frac{1}{n_j}\sum_{i=1}^{j-1} s_ii^2.$$  
For notational convenience, define 
$g(t) = H^{-1}(t)$.

{\bf Case 1.} Suppose that $(1-s)g'(s) \leq (1-t)g'(t)$.  We will construct $Y \in P_{\bar t}$, 
where $\bar t$ is the infinite sequence defined by
$$ t_i = M(s_i, \delta) + O(1/i).$$
(The small extra factor is just to round $t_i$ up to a fraction of the form $k/i$.)
Let $\epsilon_i\rightarrow 0$ 
be an infinite sequence decreasing slowly enough that $N=i$ satisfies 
\cref{lem:main3} for $\epsilon_i$.  Let $\bar \delta$ be defined by $\delta_i = \delta + \epsilon_i$.
By that lemma and the usual compactness argument, there is $Y \in P_{\bar t}$
with $d(X,Y) \leq \delta$.  We must show that if $Y \in P_{\bar t}$, then $\dim(Y) \geq t$.

Let $r(x) = M(x, \delta)$, so that $t_i \geq r(s_i)$.  By \cref{lem:compression2}, 
$$\dim(Y) \geq \liminf_j \frac{1}{n_j}\sum_{i=0}^{j-1} r(s_i)i^2.$$
We would like it if $r(x)$ were convex, so that we could conclude that if $\hat s_j \approx s$, 
then $\frac{1}{n_j} \sum_{i=0}^{j-1} r(s_i)i^2 \geq r(\hat s_j) \approx r(s) = t$.  
But it is not convex, so we use a convex approximation.  Let $\ell(x)$ be the 
tangent line to $r(x)$ at $s$.  Note that since $r$ is increasing, $\ell$ has positive slope. 
Below we will show that $\ell(x) \leq r(x)$ on $[0,1]$.  
Assuming this, we can finish the argument.  Whenever $\hat s_j \leq s$, we have 
$\ell(\hat s_j) = \ell(s) \pm \epsilon = t\pm \epsilon$, with $\epsilon$ vanishing in 
the limit infimum.  On the other hand, if $\hat s_j > s$, then since $\ell$ is increasing 
and each 
$t_i \geq \ell(s_i)$, we have 
$$\frac{1}{n_j} \sum_{i=0}^{j-1} r(s_i)i^2 \geq  \ell(\hat s_j) > \ell(s) = t,$$ as required.

It remains to show that $\ell(x) \leq r(x)$.
The 
proof of the following lemma is elementary but not short; here we state and use it,
but delay a proof sketch to the end of the section.

\begin{lemma}\label{lem:convex}
There is a point $z \in (0,1)$ such that $r(x)$ is convex on $(0,z)$ and concave on $(z,1)$.
\end{lemma}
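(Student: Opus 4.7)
The plan is to reduce the convexity question for $r(x)=M(x,\delta)$ to a question about monotonicity of a single auxiliary function. First, observe that $r(x)=H(g(x)+\delta)$ on $[0,x_0]$, where $x_0=H(1/2-\delta)$, while $r(x)\equiv 1$ on $[x_0,1]$. The latter region is trivially both concave and convex, so all the content is the interior behaviour on $[0,x_0]$.

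On $[0,x_0]$, writing $y=g(x)$ and applying the chain rule (with $g'=1/H'$ and $g''=-H''/(H')^3$) yields
\[
r''(x)=\frac{H''(y+\delta)\,H'(y)-H'(y+\delta)\,H''(y)}{H'(y)^3}.
\]
Since $H'(y)>0$ and $H''<0$ on $(0,1/2)$, a short manipulation shows that the sign of $r''(x)$ coincides with the sign of $\psi(y+\delta)-\psi(y)$, where
\[
\psi(y)=-\frac{H'(y)}{H''(y)}=y(1-y)\log\frac{1-y}{y}.
\]
Thus the lemma reduces to showing that on $y\in[0,1/2-\delta]$ the difference $\Delta(y):=\psi(y+\delta)-\psi(y)$ is positive near $0$, negative near $1/2-\delta$, and crosses zero exactly once.

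The key step is to show that $\psi$ is strictly unimodal on $(0,1/2)$: there is a unique $y^\ast\in(0,1/2)$ with $\psi$ strictly increasing on $(0,y^\ast)$ and strictly decreasing on $(y^\ast,1/2)$. A direct computation gives $\psi'(y)=(1-2y)\log\frac{1-y}{y}-1$, with $\psi'(0^+)=+\infty$ and $\psi'(1/2)=-1$, so $\psi'$ has at least one zero. To get uniqueness I would differentiate once more, verify that $\psi'''>0$ on $(0,1/2)$ by an elementary positive-discriminant calculation, and combine this with $\psi''(1/2)=0$ to force $\psi''<0$ throughout $(0,1/2)$; hence $\psi'$ is strictly decreasing, and its zero $y^\ast$ is unique.

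Given unimodality of $\psi$, a three-case analysis finishes the proof. If $y+\delta\le y^\ast$, both points lie in the increasing branch, so $\Delta(y)>0$; if $y\ge y^\ast$, both lie in the decreasing branch, so $\Delta(y)<0$; and on the transition interval $y\in(\max(0,y^\ast-\delta),y^\ast)$ the function $y\mapsto\psi(y+\delta)$ is strictly decreasing while $y\mapsto\psi(y)$ is strictly increasing, so $\Delta$ is strictly decreasing there and has a unique zero by the intermediate value theorem (the boundary values at that interval are positive on the left, using $\psi(y^\ast)\ge \psi(y^\ast-\delta)$, and negative on the right, using $\psi(y^\ast+\delta)<\psi(y^\ast)$). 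Pulling this unique zero $y_0$ back through the increasing bijection $x=H(y)$ yields the required breakpoint $z=H(y_0)\in(0,x_0)\subset(0,1)$. The main obstacle is establishing the strict unimodality of $\psi$; once that is in hand, the rest is bookkeeping.
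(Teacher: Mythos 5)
Your reduction is exactly the one in the paper: pass to $y=g(x)$, observe that $r''$ shares its sign with $\psi(y+\delta)-\psi(y)$ where $\psi=-H'/H''$, and argue from properties of $\psi$ (the paper calls this function $f$; it equals $\psi/\ln 2$, so the signs agree). The key fact you need, $\psi''<0$ on $(0,1/2)$, is also the same, so the proposal is correct and follows the same essential line. Two small simplifications are available, though. First, once you have $\psi''<0$, there is no need to pass through unimodality of $\psi$ and the three-case split: $\psi'$ is then strictly decreasing, so $\Delta'(y)=\psi'(y+\delta)-\psi'(y)<0$ directly, and with $\Delta(0^+)=\psi(\delta)>0$ and $\Delta(1/2-\delta)=-\psi(1/2-\delta)<0$ the unique sign change follows at once; this is what the paper does. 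Second, $\psi''(y)=-2\ln\frac{1-y}{y}-\frac{1-2y}{y(1-y)}$ is manifestly negative on $(0,1/2)$, since both summands are clearly negative there, so the extra step through $\psi'''>0$ and $\psi''(1/2)=0$ (while it does work) is not needed.
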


We can assume $t<1$ (if $t=1$, we are 
covered by \cref{thm:generalization}), so in a neighborhood of $s$, $r(x) = H(g(x) + \delta)$.  
We claim that $r$ is convex at $s$.  
Consider the slope of $r$ at $s$.  We have $r'(x) = H'(g(x)+\delta)g'(x)$, so (using that $H'(g(x)) = 1/g'(x)$)
$$r'(s) = H'(g(t))g'(s) = g'(s)/g'(t).$$
By the case assumption, $r'(s)$ is less than the slope of the line connecting $(s,t)$ to $(1,1)$. 
Since $r'(s)$ is also the slope of $\ell$, it follows that $\ell(1) \leq 1$, a fact we use later. 
If $r$ were concave already at $s$ (and therefore also onwards), its graph would lie
below this line for all $x>s$, so we would have $r(x) <1$ for all $x \in (s,1)$.  But when $g(x)+\delta = 1/2$, 
$r(x)=1$, and this happens for some $x \in (s,1)$.  Therefore, $r(x)$ is convex at $s$, 
so $s\leq z$.

By convexity of $r$ on $(0,z)$, $\ell(x) \leq r(x)$ on $[0,z]$.  
Also, $\ell(1) \leq 1$ by the case assumption,
 and $r(1) = 1$.  Since $\ell(z) \leq r(z)$ and $\ell(1) \leq r(1)$, 
 and since $\ell$ is linear on $[z,1]$ while $r$ is concave on that interval, $\ell(x) \leq r(x)$
  on that interval.  Therefore, $\ell(x) \leq r(x)$ on $[0,1]$, completing the proof of this case.

\medskip

{\bf Case 2.}  Suppose that $(1-t)g'(t) \leq (1-s)g'(s)$.   Let 
$$\ell(x) = \frac{1-t}{1-s}x+ \frac{t-s}{1-s}.$$
This is the line containing $(s,t)$ and $(1,1)$.
We will construct $Y \in P_{\bar t}$, 
where $\bar t$ is the infinite sequence defined by 
$$t_i =  \ell(s_i) + O(1/i).$$
(Again, the extra factor is just to round $t_i$ up to a fraction of the form $k/i$.)  By linearity, 
if each $(s_i, t_i)$ is on or above the graph of $\ell$, then 
$$\left(\frac{1}{n_j}\sum_{i=1}^{j-1} s_ii^2, \frac{1}{n_j}\sum_{i=1}^{j-1} t_ii^2\right)$$
is also.  By \cref{lem:compression2}, the limit infimum of the first coordinate is 
$\dim(X)$, and the limit infimum of the second coordinate is a lower bound for $\dim(Y)$. 
So if $\dim(X) = s$, then $\dim(Y) \geq t$ as required.  Considering also the 
density of changes, we need to 
find $Y \in P_{\bar t}$ with $d(X,Y) \leq g(t) - g(s)$.

Let $\bar \epsilon \rightarrow 0$ be an infinite sequence decreasing slowly enough 
that $N=i$ satisfies \cref{lem:main3} for $\epsilon_i$.  Let $\bar \delta$ be defined by 
$$\delta_i = g(t_i) - g(s_i) + \epsilon_i.$$
Observe that $$M(s_i, \delta_i-\epsilon) + O(1/i) = t_i.$$
This is the complexity increase guaranteed by \cref{lem:main3},
so by that lemma and the usual compactness argument, there is $Y \in P_{\bar t}$ 
with 
$$d(X,Y)  \leq \limsup_j \frac{1}{n_j} \sum_{i=1}^{j-1} \delta_i i^2.$$ 
Now, letting 
\[p(x) = g(\ell(x)) - g(x),\]
 by the uniform continuity of $g$ on $[0,1]$, 
we may additionally assume that $\epsilon_i$ decreases slowly enough that 
$|x-y|<1/i$ implies $|g(x) - g(y)|< \epsilon_i$, so that 
$\delta_i \leq p(s_i) + 2\epsilon_i$.  Since $\epsilon_i\rightarrow 0$, 
$$d(X,Y) \leq  \limsup_j \frac{1}{n_j} \sum_{i=1}^{j-1} p(s_i) i^2.$$

The proof of the concavity of $p$ is elementary but not short.  We just use 
the lemma here and give a sketch of the proof
at the end of this section.
\begin{lemma}\label{lem:concave}
The function $p$ is concave on $[0,1]$.
\end{lemma}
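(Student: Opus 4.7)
My plan is to prove concavity directly by showing $p''(x) \le 0$ pointwise on $(0,1)$, which together with continuity of $p$ on $[0,1]$ yields concavity on $[0,1]$. Since $g = H^{-1}$ and $\ell$ is affine with slope $(1-t)/(1-s)$, the chain rule gives
\[
p''(x) = \left(\frac{1-t}{1-s}\right)^{\!2} g''(\ell(x)) - g''(x),
\]
so \cref{lem:concave} reduces to the pointwise inequality $\left(\tfrac{1-t}{1-s}\right)^{2} g''(\ell(x)) \le g''(x)$ for all $x \in (0,1)$.

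The next step is to make $g''$ explicit. Differentiating $g'(H(y)) = 1/H'(y)$ once more gives $g''(H(y)) = -H''(y)/[H'(y)]^{3}$, and plugging in $H'(y) = \log\tfrac{1-y}{y}$ and $H''(y) = -1/[y(1-y)]$ (up to a $\ln 2$ factor depending on the log base) yields
\[
g''(x) = \frac{1}{y(1-y)\bigl[\log\tfrac{1-y}{y}\bigr]^{3}}, \qquad y = g(x) \in [0,\tfrac{1}{2}].
\]
Writing $z = g(\ell(x))$, so that $H(z) = \ell(H(y))$ couples $y$ and $z$ linearly on the entropy scale, the target inequality becomes the transcendental comparison
\[
\left(\tfrac{1-t}{1-s}\right)^{2} y(1-y)\bigl[\log\tfrac{1-y}{y}\bigr]^{3} \le z(1-z)\bigl[\log\tfrac{1-z}{z}\bigr]^{3}.
\]

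Verifying this comparison on the full interval is the main obstacle. I would first dispatch the endpoints: at $x=0$ one has $g''(0) = \infty$ with $g''(\ell(0))$ finite, so the inequality is immediate; at $x = 1$, both $y$ and $z$ tend to $1/2$, and the Taylor expansion $H(y) = 1 - (2/\ln 2)(y-1/2)^{2} + O((y-1/2)^3)$ gives the asymptotic $g''(x) \sim C(1-x)^{-3/2}$, from which one checks that the two sides vanish in a ratio of $((1-t)/(1-s))^{1/2} < 1$, confirming the inequality in the limit. For the interior, I would reparametrize by $y \in (0,1/2)$: the implicit relation $H(z) = \ell(H(y))$ gives $z'(y) = \tfrac{1-t}{1-s}\cdot H'(y)/H'(z)$, reducing the claim to a single-variable statement about $\varphi(y) := y(1-y)[H'(y)]^{3}$ comparing $\varphi(y)$ and $\varphi(z)$ along the coupling. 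A substitution such as $u = \log\tfrac{1-y}{y}$ should then convert the remaining expression into an algebraic inequality amenable to elementary differentiation and monotonicity arguments. The overall computation is elementary but, as the authors warn, not short; the real work is organising the bookkeeping so that the convexity of $g$ and the specific shape of $H'$ can be isolated to produce the required sign.
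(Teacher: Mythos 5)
Your setup is correct: you arrive at the right target inequality $\left(\tfrac{1-t}{1-s}\right)^{2}g''(\ell(x)) \le g''(x)$, your formula for $g''$ is correct, and the endpoint asymptotics are plausible. But the heart of the lemma---the verification on the interior---is left as a declaration of intent rather than an argument. You say the reparametrization by $y$ and the substitution $u=\log\tfrac{1-y}{y}$ ``should'' reduce matters to an elementary inequality, but nothing in what you write actually produces such a reduction, and it is far from obvious that it would. The difficulty is that the comparison is really a two-parameter inequality: writing $a=(1-t)/(1-s)$ and $\ell(x)=ax+1-a$, you must show $a^{2}g''(ax+1-a)\le g''(x)$ for all $(a,x)\in(0,1]^{2}$, and the points $y=g(x)$, $z=g(\ell(x))$ are coupled through the relation $H(z)=aH(y)+1-a$, which itself depends on $a$. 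Treating this as a one-variable problem in $y$ (or $u$) at fixed $a$ buries the $a$-dependence in a transcendental coupling, and you do not exhibit any monotonicity or sign argument that closes it.

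The paper's proof uses exactly the organizing idea that is missing from your sketch: set $k(a,x) = a^{2}g''(ax+1-a) - g''(x)$, observe that $k(1,x)\equiv 0$, and prove $\partial k/\partial a \ge 0$ on $(0,1]^{2}$; this immediately gives $k(a,x)\le 0$ for $a\le 1$, i.e., $p''\le 0$. That trick decouples $a$ from $x$ and converts the problem into a genuine single-variable sign check (after the substitution $y=g(ax+1-a)$), which then succumbs to the chain of derivative tests the paper carries out ($f\ge 0$ via $f(1/2)=0$, $f'\le 0$ via $f'(1/2)=0$ and $f''\ge 0$, etc.). Without differentiating in $a$, you have no such anchor at $a=1$, and your ``reduce to a single-variable statement'' step is a gap: it names an object $\varphi(y)=y(1-y)[H'(y)]^{3}$ and a coupling but gives no mechanism for comparing $\varphi(y)$ with $\varphi(z)$ weighted by $a^{2}$. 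To repair the proposal you would either need to adopt the paper's differentiate-in-$a$ device or supply a concrete monotonicity argument for the coupled comparison; as written, the proof does not exist yet.
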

By the concavity of $p$, for each $j$ we have 
$$\frac{1}{n_j} \sum_{i=1}^{j-1} p(s_i) i^2 \leq p(\hat s_j).$$
When $\hat s_j \leq s$, we have $p(\hat s_j) = p(s) \pm \epsilon = \delta\pm\epsilon$,  
with $\epsilon$ 
vanishing in the limit supremum.  
But when $\hat s_j > s$, we still need to bound $p(\hat s_j) \leq \delta = p(s)$.  
Here we use the case assumption. We claim $p$ is decreasing on $[s,1]$, 
because $p'(x) \leq 0$ is true exactly when
$$g'(\ell(x))\frac{1-t}{1-s} - g'(x) \leq 0,$$
which is satisfied when $x = s$ by assumption, and therefore satisfied 
for $x>s$ because $p$ is concave.  Therefore, $p(\hat s_j) \leq \delta$ for $\hat s_j > s$, so 
$d(X,Y) \leq \delta$.
\end{proof}

Now we sketch the lemmas about convexity and concavity used above.  The proofs 
use only undergraduate calculus, mostly of a single variable.

\begin{proof}[Proof sketch of \cref{lem:convex}.]
Given $r(x) = M(x,\delta)$, we need to show there is a $z \in (0,1)$ such that 
$r$ is convex on $(0,z)$ and concave on $(z,1)$.  It suffices to 
consider only what happens on the interval $[0, H(1/2-\delta)]$, 
since $r$ is increasing and 
$r(x) = 1$ for $x\geq H(1/2-\delta)$, continuing the concavity begun at $z$.
For $x$ in this interval,
$$r''(x) = H''(g(x) + \delta)(g'(x))^2 + H'(g(x)+\delta)g''(x)$$
Since these functions all reference $g(x)$, we make the substitution $y = g(x)$.  Note that $y \in [0, 1/2-\delta]$.
With this substitution,
$$r''(x) = H''(y+\delta)/(H'(y))^2 + H'(y+\delta)(-H''(y))/(H'(y))^3$$
Multiplying by $\ln(2)(H'(y))^3$ and dividing by $H''(y)H''(y+\delta)$, $r''(x)$ shares its sign with 
$$w(y) = f(y+\delta) - f(y)$$
where $f(y) = y(1-y)\log_2(1/y-1)$.  When $y = 0$, $w(x) = f(\delta) > 0$ (since $\delta<1/2$).  When $y = 1/2-\delta$, $w(x) = -f(1/2-\delta) < 0$.  (Perhaps neither $r''$ nor $f$ is strictly defined in these places, but the limits exist).  So to show that $r''(x)$ is positive on $(0,z)$ and negative on $(z,H(1/2-\delta))$ for some $z$ in $(0,H(1/2-\delta))$, since $g'(x)$ is positive, it suffices to show that $w$ is strictly decreasing.  Equivalently, $f'(y+\delta) - f'(y)$ is negative; it suffices to show that $f'(y)$ is strictly decreasing on $(0,1/2)$; equivalently $f''(y)$ is strictly negative on this interval. But
$$f''(y) = -(1-2y)/(\ln2(y-y^2)) - 2 \log_2(1/y-1),$$
which is negative, so we are done.
\end{proof}

\begin{proof}[Proof sketch of \cref{lem:concave}.]
Letting $p(x) = g(\ell(x)) - g(x)$ for $\ell$ with slope in $[0,1]$, we show that $p$ is concave.  
Since $\ell(1) = 1$, we write $\ell(x) = ax + 1-a$.  We need to show
$$p''(x) = g''(ax + 1-a)a^2 - g''(x)$$
is non-positive.  When $a=1$, $p''(x) \equiv 0$, so defining $k(a,x) = g''(ax + 1-a)a^2 - g''(x)$, 
it suffices to show that $\frac{\partial k}{\partial a}(a,x)$ is non-negative for $(a,x) \in (0,1]^2$.
We have 
$$\frac{\partial k}{\partial a}(a,x) = g'''(ax+1-a)(x-1)a^2 + 2ag''(ax+1-a),$$
which, with the substitution $y = g(ax+1-a)$, simplifies to 
$$\frac{\partial k}{\partial a}(a,x) = a\left(g'''(H(y))(H(y)-1) + 2g''(H(y))\right).$$
So it suffices to show that this function is non-negative for
all $y \in [g(1-a), 1/2]$.  Expanding the computation of $g'''$ and $g''$, we have 
$$\frac{\partial k}{\partial a}(a,x) = f(y)/(a(H'(y))^5(1-y)^2y^2(\ln 2)^2)$$
where $f(y)$, which has the same sign as $\partial k/\partial a$, is
\begin{multline*} f(y) = \left(3 - (\ln 2)(1-2y) \log_2\left(\frac{1}{y}-1\right)\right) (H(y) -1) \\
+ 2(\ln 2) y (1-y) \left(\log_2\left(\frac{1}{y}-1\right)\right)^2
\end{multline*}
It suffices to show that $f(y)\geq 0$ on $[g(1-a), 1/2]$.  Since $f(1/2) = 0$, it
suffices to show that $f'(y)\leq 0$.  One may check that $f'(1/2) = 0$, so it 
suffices to show that $f''(y)\geq 0$.  We have 
$f''(y) = (h(y)+y(1-y)) / (y^2(1-y)^2)$, where 
$$h(y) = \ln(2-2y) -y(2-3y+2y^2)\ln(1/y-1).$$
It suffices to show that $h(y)\geq 0$.  Since $h(1/2) = 0$, it suffices to show 
that $h'(y)\leq 0$.  One may check $h'(1/2) =0$, so 
it suffices to show that $h''(y)\geq 0$.  We have 
$$h''(y) = \frac{3(1-y)y\ln(1/y-1)+(1-2y)}{2y(1-y)(1-2y)}\geq 0,$$ 
completing the proof.
\end{proof}

\bibliographystyle{alpha}
\bibliography{biblist}

\end{document}